\numberwithin{equation}{section}
\theoremstyle{plain}
\newtheorem{theorem}{Theorem}[section]
\newtheorem{proposition}[theorem]{Proposition}
\newtheorem{corollary}[theorem]{Corollary}
\newtheorem{lemma}[theorem]{Lemma}
\theoremstyle{remark}
\newtheorem{remark}[theorem]{Remark}
\theoremstyle{definition}
\newtheorem{definition}[theorem]{Definition}
\DeclarePairedDelimiter{\abs}{\lvert}{\rvert}
\newcommand{\sgn}{\operatorname{sgn}}
\newcommand{\vol}{\operatorname{vol}}
\newcommand{\arctanh}{\operatorname{artanh}}
\newcommand{\Haus}{\mathscr{H}}
\newcommand{\Ham}{\mathbb{H}}
\newcommand{\Hc}{\textbf{H}}
\newcommand{\K}{\mathbb{K}}
\newcommand{\R}{\mathbb{R}}
\newcommand{\C}{\mathbb{C}}
\newcommand{\Oct}{\mathbb{O}}
\newcommand{\N}{\mathbb{N}}
\newcommand{\Dist}{\mathcal{H}}
\newcommand{\Vist}{\mathcal{V}}
\title[Approaching the isoperimetric problem in $ H_{\C}^m$]{Approaching the isoperimetric problem in $H_{\C}^m$ via the hyperbolic log-convex density conjecture}
\author{Lauro Silini}
\begin{document}
\begin{abstract}We prove that geodesic balls centered at some base point are isoperimetric in the real hyperbolic space $H_{\R}^n$ endowed with a smooth, radial, strictly log-convex density on the volume and perimeter. This is an analogue of the result by G. R. Chambers for log-convex densities on $\R^n$. As an application we prove that in any rank one symmetric space of non-compact type, geodesic balls are isoperimetric in a class of sets enjoying a suitable notion of radial symmetry.
\end{abstract}
\maketitle
\section{Introduction}
We denote by $(H_{\R}^n,g_{H})$ the real hyperbolic space of dimension $n\in\N$ with constant sectional curvature equal to $-1$. Call $d_H$ the induced Riemannian distance. Choose an arbitrary base point $o\in H_{\R}^n$. We say that a function $f:H_{\R}^n\to\R_{>0}$ is (strictly) radially log-convex if
\[
\ln(f(x))=h(d_{H}(o,x)),
\]
for a smooth, (strictly) convex and even function $h:\R\to\R$. We define the weighted perimeter and volume of a  set with finite  perimeter $E\subset H_{\R}^n$ as
\[
V_f(E):=\int_E f\,d\Haus^n,\quad\text{and}\quad P_f(E)=\int_{\partial^*E}f\,d\Haus^{n-1}.
\]
Here, following the notation in \cite{Maggi}, $\partial^*E$ denotes the reduced boundary of $E$. A set of finite perimeter $E$ with volume $V_f(E)=v>0$ is called isoperimetric  if it solves the minimization problem
\begin{equation}\label{eq:isoperimetric_inf}
\inf\Bigl\{P_f(F):V_f(F)=v,F\subset H_{\R}^n\text{ of finite perimeter}\Bigr\}.
\end{equation}
The first goal of this paper is to show the following characterization of the isoperimetric sets, which will be developed in Section \ref{sec:proof}.
\begin{theorem}\label{thm:log_convex}For any strictly radially log-convex density $f$, geodesic balls centered at $o\in H_{\R}^n$ are isoperimetric sets with respect to the weighted volume and perimeter $V_f$ and $P_f$.
\end{theorem}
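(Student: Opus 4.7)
The plan is to follow the strategy of G.\,R.\ Chambers for the Euclidean log-convex density conjecture, adapting each step to the hyperbolic setting of $H_{\R}^n$.

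First, I would establish existence and regularity of isoperimetric sets. Strict convexity of $h$ together with evenness forces $h'(r)>0$ for $r>0$, so $f$ is strictly radially increasing. Combined with the exponential volume growth of geodesic balls in $H_{\R}^n$, this yields a concentration-compactness argument that prevents mass from escaping to infinity along a minimizing sequence and produces an isoperimetric set $E$. Standard regularity theory for weighted perimeter minimizers then gives that $\partial^* E$ is smooth outside a singular set of Hausdorff dimension at most $n-8$, and has constant $f$-weighted mean curvature on the regular part.

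Second, I would apply hyperbolic spherical symmetrization about a geodesic $\gamma$ through $o$: on each geodesic sphere $\partial B_r(o)$, replace $E\cap\partial B_r(o)$ by a metric disk on that sphere centered at $\gamma\cap\partial B_r(o)$ of equal $\Haus^{n-1}$-measure. Since $f$ is constant on $\partial B_r(o)$, the step preserves $V_f$, while a co-area inequality shows it cannot increase $P_f$. We may therefore assume $E$ is a set of revolution about $\gamma$, generated by a planar profile curve $\sigma$ in a meridian half-plane $\Pi$ isometric to a half-plane of $H_{\R}^2$.

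Third, the problem reduces to a one-dimensional variational problem for $\sigma$. In Fermi coordinates $(t,s)$ about $\gamma$ on $\Pi$, with $o$ at the origin, $V_f$ and $P_f$ become explicit integrals involving the rotational Jacobian $(\sinh s)^{n-2}$ and the radial factor $f(d_H(o,\cdot))$, where $\cosh d_H(o,(t,s))=\cosh t\cosh s$. The Euler--Lagrange equation says that $\sigma$ has constant $f$-weighted mean curvature in $\Pi$. The critical geometric claim is that any such minimizing $\sigma$ must meet $\gamma$ precisely at $o$; once this is established, ODE uniqueness forces $\sigma$ to be an arc of a geodesic circle centered at $o$, so that $E$ is a ball around $o$.

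The main obstacle is this last claim. I would adapt Chambers's tilting argument: parametrize $\sigma$ by arclength, study the angle its tangent makes with $\gamma$, and introduce auxiliary integral quantities measuring the \emph{off-centeredness} of $\sigma$ relative to $o$. Using the constant-mean-curvature ODE and the strict monotonicity of $h'$ on $(0,\infty)$, which is precisely the strict log-convexity hypothesis, I would show that if $\sigma$ does not meet $o$, then a volume-preserving variation obtained by composing a hyperbolic translation along $\gamma$ with a compensating normal deformation strictly decreases $P_f$, contradicting minimality. The technical difficulty is controlling the hyperbolic correction terms---the $\sinh$ Jacobian and the nontrivial variation of $h'(d_H(o,\cdot))$ along $\sigma$---and verifying that strict convexity of $h$ dominates them; this should be tractable by a careful auxiliary energy patterned on Chambers's Euclidean monotonicity identities.
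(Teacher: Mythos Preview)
Your first two steps---existence and regularity, then spherical symmetrization to reduce to a profile curve in $H_{\R}^2$---match the paper. The gap is in the third and fourth steps. First, the target claim is mis-stated: the profile of a centered ball does not meet the axis at $o$; it meets the axis at two points equidistant from $o$. What must be shown is that the profile curve is a circle centered at $o$, and this does not follow from ``ODE uniqueness'' once you know an incidence condition---the constant-$\Hc_f$ ODE has a multi-parameter family of solutions, and one must rule out all non-centered ones.

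More importantly, the mechanism you propose for ruling them out is not Chambers's argument and is not what the paper does. There is no translation-plus-compensation variational step in Chambers or here; any volume-preserving variation of a minimizer has vanishing first variation, so a ``translate and adjust'' move cannot strictly decrease $P_f$ to first order, and promoting this to a genuine competitor argument for an \emph{arbitrary} (non-circular) profile is exactly the hard part you would be assuming away. The paper instead tracks the tangent angle of the profile $\gamma$ relative to a hypercyclic frame $\{X,X^\perp\}$ and proves a \emph{tangent lemma}: if $\gamma$ is not a centered circle, then using $\Hc_f=\text{const}$ and three curvature-comparison lemmas (for $\kappa_\gamma$, for $\kappa(C_t)$, and for the density term $H_1$), one shows $\gamma$ first arches up ($\dot\gamma=X^\perp$ at some $a_0$), then curves down strictly faster ($\dot\gamma=-X$ at some $a_1$), and finally, by mean-convexity $H\geq n-1$, must curl back to $\dot\gamma=X$ at some $a_2<a$. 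This last condition forces $g_H(N,\dot\gamma)>0$, contradicting the monotonicity $g_H(N,\dot\gamma)\leq 0$ imposed by spherical symmetry. The genuinely hyperbolic work is in adapting the comparison lemmas to the $\{X,X^\perp\}$ frame and the hypercyclic foliation; your proposal does not engage with this machinery.
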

This result is the hyperbolic twin of an analogous result in the Euclidean space, conjectured first by Kenneth Brakke (see \cite[Conjecture 3.12]{rosales2008isoperimetric}), stating that Euclidean balls centered at the origin are isoperimetric in $\R^n$ endowed with a log-convex density. We  invite the reader to consult F. Morgan \cite{morgan2003regularity} and F. Morgan and A. Pratelli \cite{morgan2013existence} for existence and regularity properties, V. Bayle, A. Cañete, F. Morgan and C. Rosales \cite{rosales2008isoperimetric} for the stability, A. Figalli and F. Maggi \cite{figalli2013isoperimetric} for the small volume regime, A. Kolesnikov and R. Zhdanov \cite{Kolesnikov} for the large volume regime. The conjecture was then recently completely solved by the remarkable article by G. R. Chambers \cite{chambers}. In fact, the first and main part of this paper is an adaptation of the method presented in the latter to the real hyperbolic space.
\par In the hyperbolic setting, the two dimensional case was solved by I. McGillivray in \cite{McGillivray}. We refer as well to \cite{Bongiovanni,Giosia,Howe,morgan2000isoperimetric} for other works related to this problem.
\par Even if interesting by itself, our main motivation in proving such result is the tight relation of this problem with the (unweighted) isoperimetric problem in the complex hyperbolic spaces $H_{\C}^m$, the quaternionic spaces $H_{\Ham}^m$ and the Cayley plane $H_\Oct^2$ restricted to a family of sets sharing a particular symmetry that we define as follows.
\begin{definition}[Hopf-symmetric sets]\label{def:hopf_symmetric}Let $\K\in\{\C,\Ham,\Oct\}$, $d=\dim(\K)\in\{2,4,8\}$ and $(M,g)=(H_{\K}^m,g)$ be the associated rank one symmetric space of non-compact type of  real dimension $n=dm$, $m=2$ if $\K=\Oct$. Fix an arbitrary point $o\in M$ and let $N$ be the unit length radial vector field emanating from $o$. Then, up to renormalization of the metric, the Jacobi operator $R(\cdot,N)N$ arising from the Riemannian curvature tensor $R$ is a self adjoint operator of $TM$, and has exactly three eigenvalues: 
$\{0,-1,-4\}$. The $(-4)$-eigenspace defines at every point $x\neq o$ a distribution $\Dist_x$ of real dimension $d-1$.  A $C^1$-set $E\subset M$ with normal vector field $\nu$ is said to be \emph{Hopf-symmetric} if $\nu(x)$ is orthogonal to $\Dist_x$ at each point $x\in\partial E$, $o\not\in\partial E$.
\end{definition}
\begin{remark}Let $h:S^{n-1}\to\K P^{m-1}$ be the celebrated Hopf fibration. Then, for any $C^1$-profile $\rho:S^{n-1}\to (0,+\infty)$ so that $\rho$ is constant along the fibres of $h$, the set with boundary
\[
\partial E:=\{\exp_o(\rho(x)):x\in S^{n-1}\subset T_oM\},
\]
is Hopf-symmetric.
\end{remark}
\begin{remark} Being \emph{Hopf-symmetric} has not to be confused with the standard notion of being \emph{Hopf} in $H_{\C}^m$, that is a set with principal curvature along the characteristic directions $J\nu$, where $J$ denotes the associated complex structure. It is worth saying that spheres are the only Hopf, compact, embedded mean curvature surfaces in $H_{\C}^m$, as it is proven by X. Wang in \cite{wang2017integral}. The natural generalization of this concept when $\K\in\{\Ham,\Oct\}$ is being a \emph{curvature-adapted hypersurface}, that is the normal Jacobi operator $R(\cdot,\nu)\nu$ commutes with the shape operator.
\end{remark}
We adopt the notation of Definition \ref{def:hopf_symmetric} for the rest of the paper. Let $P$ and $V$ be the perimeter and volume functionals induced by $g$ in $H_{\K}^m$. Consider the (unweighted) isoperimetric problem
\begin{equation}\label{eq:isoperimetric_inf_KH}
\inf\Bigl\{P(F):V(F)=v,F\subset H_{\K}^m\text{ Hopf-symmetric}\Bigr\}.
\end{equation}
We dedicate Section \ref{sec:corollary} to the proof of this theorem.
\begin{theorem}\label{thm:2}If geodesic balls centered at $o\in H_{\R}^n$ are isoperimetric with respect to Problem \eqref{eq:isoperimetric_inf} for the strictly radial log-convex density
\[
f(x)=\cosh(d_{H_{\R}^n}(o,x)))^{d-1},\quad d=\dim(\K),
\]
then geodesic balls in $H_{\K}^m$ are optimal with respect to the isoperimetic Problem \eqref{eq:isoperimetric_inf_KH}.
\end{theorem}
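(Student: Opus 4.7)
The plan is to realize Problem \eqref{eq:isoperimetric_inf_KH} as a restriction of Problem \eqref{eq:isoperimetric_inf} on $H_{\R}^n$ via a natural radial diffeomorphism. To build the correspondence, I would fix a linear isometry $\iota\colon T_oH_{\R}^n\to T_oH_{\K}^m$ and set $\Phi\coloneqq\exp_o^{H_{\K}^m}\circ\iota\circ(\exp_o^{H_{\R}^n})^{-1}$; since both spaces are simply connected of non-positive curvature, $\Phi$ is a global diffeomorphism preserving the distance to $o$. Using the Jacobi equation together with the eigenvalue decomposition $\{0,-1,-4\}$ of $R(\cdot,N)N$ recalled in Definition \ref{def:hopf_symmetric}, a pointwise computation in polar coordinates yields
\[
\Phi^*g \;=\; dr^2 + \sinh^2(r)\,g^\perp + \sinh^2(r)\cosh^2(r)\,g^\parallel, \qquad g_H \;=\; dr^2 + \sinh^2(r)\bigl(g^\perp + g^\parallel\bigr),
\]
where $g^\perp$ (resp.\ $g^\parallel$) is the round metric on the horizontal (resp.\ vertical) part of $S^{n-1}\subset T_oH_{\K}^m$ induced by the radially parallel-transported distribution $\Dist$. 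In particular, the two metrics differ only along the $(d-1)$-dimensional vertical directions, where $\Phi^*g$ is rescaled by $\cosh^2(r)$.

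Next I would convert this pointwise discrepancy into identities of functionals. Taking determinants gives $\Phi^*(dV_g)=\cosh^{d-1}(r)\,dV_{g_H}=f\cdot dV_{g_H}$, hence $V(F)=V_f(\Phi^{-1}(F))$ for every measurable $F\subset H_{\K}^m$. For a Hopf-symmetric $C^1$-set $F$, Definition \ref{def:hopf_symmetric} forces $\Dist_x\subset T_x(\partial F)$ at every boundary point, so the induced metric on $\partial F$ differs from the induced metric on $\Phi^{-1}(\partial F)$ only along those same $(d-1)$ vertical directions, yielding the very same pointwise factor $\cosh^{d-1}(r)=f$ on the $(n-1)$-dimensional area element. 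This produces the perimeter identity $P(F)=P_f(\Phi^{-1}(F))$.

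To conclude, I would verify that $f$ is a strictly radial log-convex density on $H_{\R}^n$: $\ln f=h\circ d_H(o,\cdot)$ with $h(r)=(d-1)\ln\cosh r$ smooth, even, and $h''(r)=(d-1)\sech^2(r)>0$. Theorem \ref{thm:log_convex} then ensures that the weighted ball $\tilde B$ at $o$ with $V_f(\tilde B)=v$ minimizes $P_f$ among all finite-perimeter sets of weighted volume $v$. Setting $\tilde F\coloneqq\Phi^{-1}(F)$ and $B\coloneqq\Phi(\tilde B)$, which is precisely the geodesic $g$-ball at $o$ of volume $v$, the chain
\[
P(F) \;=\; P_f(\tilde F) \;\geq\; P_f(\tilde B) \;=\; P(B)
\]
finishes the argument.

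The hardest step is expected to be the perimeter identity: although the inclusion $\Dist\subset T(\partial F)$ makes the factor $\cosh^{d-1}(r)$ geometrically transparent, rigorously pushing the hypersurface area form through $\Phi$ for a general $C^1$ Hopf-symmetric boundary demands a careful change-of-variables computation, most cleanly done by choosing adapted local frames on $\partial F$ in which $\Phi^*g$ and $g_H$ are simultaneously block-diagonal. One also has to confirm that the comparison is lossless across the regularity gap between the $C^1$ competitors of Problem \eqref{eq:isoperimetric_inf_KH} and the finite-perimeter competitors admitted by Theorem \ref{thm:log_convex}; fortunately the optimizers $\tilde B$ and $B$ are smooth, so this only sharpens the inequality.
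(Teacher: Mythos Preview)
Your proposal is correct and follows essentially the same route as the paper: both build the radial diffeomorphism via the exponential maps, use the Jacobi eigenvalue decomposition to compare $g$ and $g_H$ (the paper packages this as the identity $g(v,w)=\cosh^2(d_H(o,x))g_H(v^{\Dist},w^{\Dist})+g_H(v^{\Vist},w^{\Vist})$, equivalent to your polar-coordinate formulas), and then read off $V=V_f$ and, via the Hopf-symmetry $\nu\perp\Dist$, $P=P_f$ with $f=\cosh^{d-1}(r)$. One small logical point: since Theorem~\ref{thm:2} is stated conditionally, you should invoke its hypothesis rather than Theorem~\ref{thm:log_convex} in the final chain (verifying strict log-convexity of $f$ belongs to the proof of Corollary~\ref{cor}, not here).
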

Combining this with Theorem \ref{thm:log_convex} we get immediately the following Corollary.
\begin{corollary}\label{cor}In the class of Hopf-symmetric sets, geodesic balls are isoperimetric regions in $H_{\K}^m$.
\end{corollary}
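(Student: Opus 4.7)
The plan is essentially to chain together the two theorems already stated in the introduction, after verifying that the explicit density appearing in Theorem \ref{thm:2} fits into the framework of Theorem \ref{thm:log_convex}.

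First I would check that the density
\[
f(x)=\cosh(d_{H}(o,x))^{d-1},\qquad d=\dim(\K)\in\{2,4,8\},
\]
is smooth, radial and strictly log-convex in the sense defined in the introduction. Radiality is immediate since $f$ depends only on $d_{H}(o,\cdot)$. Writing $\ln f(x)=h(d_H(o,x))$ with $h(t)=(d-1)\ln\cosh(t)$, the function $h\colon\R\to\R$ is manifestly smooth and even. For strict convexity one computes $h''(t)=(d-1)\sech^2(t)$, which is strictly positive because $d\geq 2$. Hence $f$ satisfies the hypotheses of Theorem \ref{thm:log_convex}.

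Next, I would apply Theorem \ref{thm:log_convex} to this $f$: geodesic balls centered at $o\in H_{\R}^n$ (with $n=dm$, and $m=2$ when $\K=\Oct$) are isoperimetric for the weighted problem \eqref{eq:isoperimetric_inf}. This is exactly the hypothesis of Theorem \ref{thm:2}. Therefore, by Theorem \ref{thm:2}, geodesic balls centered at $o$ solve the unweighted isoperimetric problem \eqref{eq:isoperimetric_inf_KH} in $H_\K^m$ restricted to the class of Hopf-symmetric sets. Since no further argument is needed beyond the concatenation of the two theorems, there is no real obstacle here; all the work is contained in the proofs of Theorems \ref{thm:log_convex} and \ref{thm:2}, to be carried out in Sections \ref{sec:proof} and \ref{sec:corollary} respectively.
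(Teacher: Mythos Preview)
Your proposal is correct and follows exactly the approach the paper indicates: the corollary is stated as an immediate consequence of combining Theorem \ref{thm:log_convex} with Theorem \ref{thm:2}, and your explicit verification that $h(t)=(d-1)\ln\cosh(t)$ is smooth, even, and strictly convex (via $h''(t)=(d-1)\sech^2(t)>0$) is the only thing to check before chaining the two results.
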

\subsection*{Acknowledgments}
The author would like to thank Urs Lang and Alessio Figalli for their precious guidance and constant support. A special thanks to Raphael Appenzeller for the very enjoyable and instructive exchanges about the hyperbolic plane. Lastly, the author would like to thank Miguel Dom\'inguez V\'azquez for suggesting an efficient way to include the octonionic case in the definition of Hopf-symmetric, and Frank Morgan, for the useful insight about the earlier contributions to the problem. The author has received funding from the European Research Council under the Grant Agreement No. 721675 “Regularity and Stability in Partial Differential Equations (RSPDE)”.
\section{Preliminaries}In what follows, we will always assume $E\subset H_{\R}^n$ to be an isoperimetric set with respect to the weighted problem \eqref{eq:isoperimetric_inf}.
\subsection{Qualitative properties of the isoperimetric sets}
Supposing $E$ smooth, the volume preserving first variation of the perimeter gives us that
\begin{equation}\label{eq:conserved_mean_curvature}
\Hc_f:=H+\partial_\nu h=\text{constant},
\end{equation}
on $\partial E$, where $H$ is the unaveraged inward mean curvature and $\nu$ is the  outward pointing unit normal.
Existence and regularity properties of isoperimetric sets are summarized in the following theorem. We refer to the papers of F. Morgan \cite{morgan2003regularity} and F. Morgan and A. Pratelli \cite{morgan2013existence}. Their results on $\R^n$ generalise directly to $H_{\R}^n$.
\begin{theorem}[Existence and regularity]\label{thm:regularity}For any volume $v>0$ there exists a set $E\subset H_{\R}^n$ of finite perimeter and weighted volume $V_f(E)=v$ solving the isoperimetric problem \eqref{eq:isoperimetric_inf}. Moreover, $E$ enjoys the following properties:
\begin{itemize}
\item[--]$\partial E$ is a bounded embedded hypersurface  with singular set of Hausdorff dimension at most $n-8$.
\item[--]There exists $\lambda\in\R$ such that at any regular point $x\in\partial E$, $\Hc_f(x)=\lambda$. As a consequence, $\partial E$ is mean-convex at each regular point $y\in\partial E$, that is $H(y)\geq (n-1)$.
\item[--]If  the tangent cone at $x\in\partial E$ lies in an halfspace, then it is an hyperplane, and therefore $\partial E$ is regular at $x$. In particular, $\partial E$ is regular at points $x^\star\in\partial E$ satisfying  $d_H(x^\star,o)=\sup_{x\in\partial E}d_H(x,o)$.
\end{itemize}
\end{theorem}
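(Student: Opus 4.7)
I would structure the proof in three steps, most of which amount to transporting the Euclidean arguments of Morgan \cite{morgan2003regularity} and Morgan--Pratelli \cite{morgan2013existence} to the real hyperbolic background, and then verifying the qualitative refinements specific to the log-convex hyperbolic setting.

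\textbf{Existence.} I would apply a concentration-compactness dichotomy to a minimizing sequence $\{E_k\}$ with $V_f(E_k)=v$. Since $h$ is even and convex with $h'(0)=0$, $f \geq f(o)>0$ and $h'$ is nondecreasing on $[0,\infty)$. Compare $E_k$ with a geodesic ball centered at $o$ of the same weighted volume: any portion of $E_k$ escaping to infinity either contributes unboundedly to $P_f(E_k)$ (because $f$ grows along rays out of $o$) or can be relocated near $o$ at strictly smaller weighted perimeter by convexity of $h$. This yields uniform boundedness of $\{E_k\}$ in a compact region, and BV-compactness in the hyperbolic background produces a limit $E$ solving \eqref{eq:isoperimetric_inf}.

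\textbf{Interior regularity and first variation.} Locally, a minimizer of $P_f$ is a $(\Lambda,r_0)$-almost minimizer of the unweighted hyperbolic perimeter, since $f$ is smooth and strictly positive. The De~Giorgi--Almgren--Federer regularity theory (adapted to the Riemannian setting as in \cite{Maggi}) then yields that $\partial E$ is a smooth embedded hypersurface outside a singular set of Hausdorff dimension at most $n-8$. Taking the first variation of $P_f$ under volume-preserving vector fields and integrating by parts, the weight contributes the extra term $\partial_\nu h$, giving the Euler--Lagrange condition $\Hc_f = H + \partial_\nu h = \lambda$ on the regular part of $\partial E$.

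\textbf{Mean convexity and the farthest point.} Set $r^\star := \sup_{x\in\partial E}d_H(x,o)$ and pick $x^\star \in\partial E$ realizing it. Near $x^\star$ one has $E \subset \overline{B_{r^\star}(o)}$, so the tangent cone of $\partial E$ at $x^\star$ lies in the tangent halfspace to $\partial B_{r^\star}(o)$; invoking the classical Simon--Allard fact that a minimizing tangent cone contained in a halfspace must be a hyperplane, we conclude $x^\star$ is regular. Comparison with the enclosing geodesic sphere then gives $H(x^\star) \geq (n-1)\coth(r^\star) \geq n-1$, and since $\nu(x^\star) = \nabla d_H(\cdot, o)(x^\star)$ we have $\partial_\nu h(x^\star) = h'(r^\star) \geq 0$, hence $\lambda \geq n-1 + h'(r^\star)$. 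For any regular $y \in \partial E$, using $h'$ nondecreasing on $[0,\infty)$, $d_H(y,o) \leq r^\star$, and $\langle \nabla d_H(\cdot,o),\nu(y)\rangle \leq 1$,
\[
H(y) = \lambda - h'(d_H(y,o))\,\langle \nabla d_H(\cdot, o), \nu(y)\rangle \geq (n-1) + h'(r^\star) - h'(r^\star) = n-1,
\]
which yields the claimed mean convexity.

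The main obstacle I anticipate is the existence step: the standard concentration-compactness argument for log-convex densities on $\R^n$ must be adapted to accommodate the exponential volume growth of $H_{\R}^n$ itself, which simultaneously penalizes placing mass far from $o$ and increases the background perimeter scale; balancing these against the density contribution is where the hyperbolic setting demands genuine, though routine, care beyond the Euclidean case.
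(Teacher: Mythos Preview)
The paper does not actually prove this theorem: it is stated as a summary of known facts, with the one-line justification ``We refer to the papers of F.~Morgan \cite{morgan2003regularity} and F.~Morgan and A.~Pratelli \cite{morgan2013existence}. Their results on $\R^n$ generalise directly to $H_{\R}^n$.'' There is no further argument in the text. Your proposal therefore goes well beyond what the paper supplies, and the outline you give (concentration--compactness for existence, almost-minimizer regularity for the $n-8$ bound, first variation for $\Hc_f=\lambda$, tangent-cone-in-a-halfspace at the farthest point, and a comparison argument for mean convexity) is exactly the content of those references transplanted to $H_{\R}^n$, which is what the paper is implicitly invoking.

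Your mean-convexity step is correct and in fact slightly understates what it proves: from $\lambda = H(x^\star)+h'(r^\star)$ and the sphere comparison $H(x^\star)\geq (n-1)\coth(r^\star)$ you get $\lambda\geq (n-1)\coth(r^\star)+h'(r^\star)$, and since $|\partial_\nu h(y)|\leq h'(r^\star)$ for every regular $y$ (using $h'\geq 0$ nondecreasing on $[0,\infty)$ and $|g_H(N,\nu)|\leq 1$), this yields $H(y)\geq (n-1)\coth(r^\star)>n-1$, which is the paper's claim. The only place where your sketch is genuinely soft is, as you flag yourself, the existence step: the Morgan--Pratelli argument is more delicate than ``relocate escaping mass near $o$,'' and in the hyperbolic background one must check that the density growth dominates the ambient exponential volume growth so that no mass is lost at infinity. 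This is routine but does require the log-convexity of $f$ in an essential way; your remark at the end correctly identifies this as the point needing care.
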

\subsection{The Poincaré model of $H_{\R}^n$}
Adopting the Poincaré model, $H_{\R}^n$ is conformal to the open Euclidean unit ball. At a point $x\in H_{\R}^n$ the metric is
\[
g_{H}=\frac{4}{(1-r^2)^2}g_{\text{flat}},
\]
where $r=\abs{x}$ will always denote the Euclidean distance of $x$ from the origin, and $g_{\text{flat}}$ the usual Euclidean metric of $\R^n$. The hyperbolic distance from the origin is then given by
\[
d_{H}(x,0)=2\arctanh(r).
\]
We define the boundary at infinity $\partial_\infty H_{\R}^n$ of $H_{\R}^n$ to be the Euclidean unit sphere $\partial B(0,1)=S^{n-1}$. We will identify the base point $o\in H_{\R}^n$ of the radial density $f$ with the origin $0$ in $B(0,1)$.
\subsection{Isometries and special frames in $ H_{\R}^2$}
Denote by $e_1$ and $e_2$ the horizontal and vertical Cartesian axis in the two dimensional Poincaré disk model. Also, let $(H_{\R}^2)_+$ be the intersection of $ H_{\R}^2$ with the closed upper half-plane having $e_1$ as boundary. The isometry group of $( H_{\R}^2,g_{H})$ is completely determined (up to orientation) by the Möbius transformations fixing the boundary $\partial_\infty H_{\R}^2$. Hence, geodesic spheres coincide with Euclidean circles completely contained in $ H_{\R}^2$. Their curvature lies in $(1,+\infty)$. Circles touching $\partial_\infty H_{\R}^2$ in a point are called horospheres, and have curvature equal to  1. Geodesics are arcs of (possibly degenerate) circles hitting $\partial_\infty  H_{\R}^2$ perpendicularly in two points. Arcs of   (possibly degenerate) circles that are not geodesics are called hypercycles, and have constant curvature in $(-1,1)\setminus\{0\}$. It will be convenient to work with a particular frame: define
\[
S: (H_{\R}^2)_+\to\R,
\]
to be the hyperbolic distance of a point in $ (H_{\R}^2)_+$ from the horizontal axis $e_1$. Set $X=\nabla S$, where we naturally extend by continuity $X$ at $e_1$. Then, denoting with $X^\perp$ the counterclockwise rotation of $X$ by $\frac\pi 2$ radians, since the level sets of $S$ are equidistant to each other, $\{X,X^\perp\}$ forms an orthonormal frame of $ (H_{\R}^2)_+$, see  Figure \ref{fig:frame}. The integral curves of $X$ are all geodesic rays hitting $e_1$ perpendicularly. For each $l\in[0,1)$, let $\delta_l$ be the integral curve of $X^\perp$ so that $\delta_l(0)=(0,l)$. Then, ${(\delta_l)}_{l\in[0,1)}$ is a family of equidistant hypercycles foliating $(H_{\R}^2)_+$, crossing $e_2$ perpendicularly and with constant curvature which coincides with the Eucliden one: $K_1=\frac{2l}{1+l^2}=\frac{1}{R(l)}$, where $R(l)\in(0,+\infty]$ is the radius of the Euclidean circle representing the curve. Similarly, set $\{N,N^\perp\}$ to be the orthonormal frame on $ H_{\R}^2\setminus\{0\}$ where $N$ is the radial unit length vector field emanating from the origin. Then, integral curves of $N$ are rays of geodesics, and integral curves of $N^\perp$ are concentric geodesic spheres.  Notice that the frame $\{X,X^\perp\}$ is invariant under the one-parameter subgroup of hyperbolic isometries fixing $e_1$  ($X^\perp$ is the infinitesimal generator of the action by translations) and, up to orientation reversing, under the reflections with respect to any geodesic  integral curve of $X$. Finally, notice that on $e_1$  and $e_2$, $\{X,X^\perp\}$ is a positive rescaling of $\{(0,1),(-1,0)\}$.
\begin{figure}[htbp]
\centering
\includegraphics[scale=0.7]{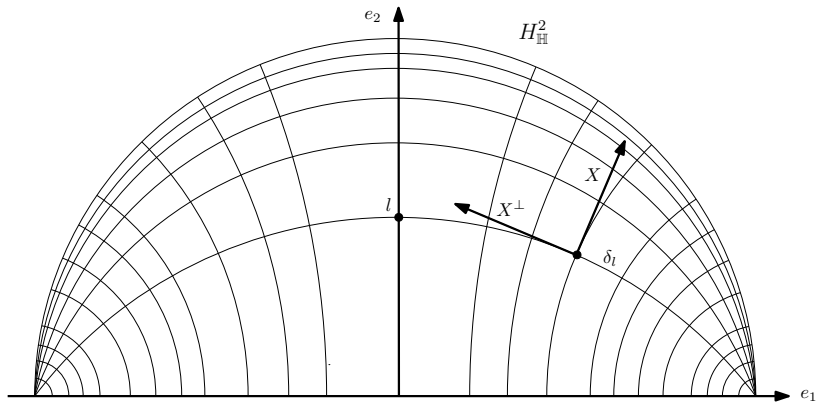}
\caption{The special frame $\{X,X^\perp\}$.}
\label{fig:frame}
\end{figure}
\par 
For a regular curve parametrized by arc length $\eta$ we denote  with $\kappa_\eta(t)$ the inward signed curvature of $\eta$ at $\eta(t)$. We recall the identity
\[
\kappa_\eta\dot\eta^\perp=\nabla_{\dot\eta}\dot\eta,
\]
where here $\nabla$ denotes the standard Levi-Civita connection associated to $g_{H}$.
\subsection{Reduction to $ H_{\R}^2$}From now on, let $E$ be an isoperimetric set with arbitrary weighted volume.
Since both the density $f$ and the conformal term  of $g_{H}$ are radial, the coarea formula implies that spherical symmetrization  pointed at the origin preserves the weighted volume and does not increase the weighted perimeter of $E$.  For this reason, we will assume $E$ spherically symmetric with respect to the $e_1$ axis. Now, intersecting $E$ with the the Euclidean plane spanned by $\{e_1,e_2\}$, we obtain a spherically symmetric profile $\Omega\subset  H_{\R}^2$. Let $x^\star$  be the  furthest point of $\Omega$ lying in the  positive part of the $e_1$ axis (this is always possible by reflecting $\Omega$ with  respect to the $e_2$ geodesic). Let $\gamma:[-a,a]\to  H_{\R}^2$ be a counter-clockwise, arclength parametrization of the boundary of the connected component of $\Omega$ containing $x^\star$, so that  $\gamma(0)=x^\star$, see Figure \ref{fig:sym}. The curve $\gamma$ enjoys the following properties:
\begin{itemize}
\item[--]$\gamma$ is smooth on $(-a,a)$. Indeed, if there exists $a^*\in(-a,a)$ such that $\gamma(a^*)$ is not regular, then $\partial E$ contains a singular set of Hausdorff dimension $n-2$, but this cannot be because of Theorem \ref{thm:regularity}.
\item[--]The curve $\gamma$ forms a simple, closed curve.
\item[--]Writing $\gamma=(\gamma_1,\gamma_2)$ in cartesian coordinates, one has that $\sgn(\gamma_2(t))=\sgn(t)$. In particular, $\gamma:[0,a)\to (H_{\R}^2)_+$.
\item[--]$\dot\gamma(0)=X(\gamma(0))$.
\end{itemize}

\begin{figure}[htbp]
\centering
\includegraphics[scale=0.7]{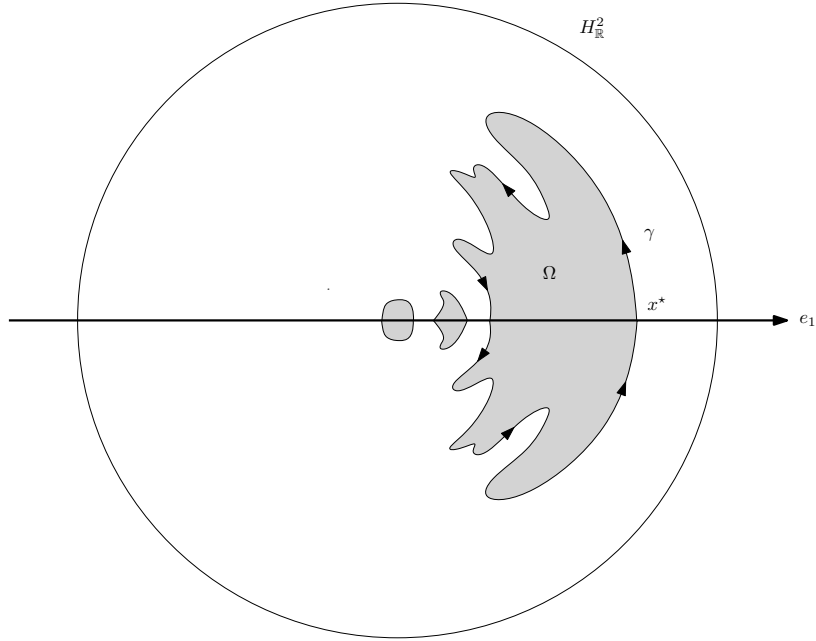}
\caption{The spherical symmetrization.}
\label{fig:sym}
\end{figure}
To translate Equation \eqref{eq:conserved_mean_curvature} as a property  of  the profile $\gamma$, we need the following definition.
\begin{definition}\label{def:comparison_c}For any $t\in[0,a)$, denote by
\begin{itemize}
\item[--]$C_t=C_t(s)$ the (possibly degenerated) oriented circle tangent to $\gamma(t)$, with center on $e_1$, parametrized by arclength and such that $C_t(0)=\gamma(t)$. Denote by $\kappa(C_t)$ its constant curvature. 
\item[--]Similarly, call $c_t=c_t(s)$ the (possibly degenerated) oriented circle tangent to $\gamma(t)$, parametrized by arclength, such that $c_t(0)=\gamma(t)$ and $\kappa(c_t)=\kappa_\gamma(t)$. 
\item[--]Define $x(C_t)$ and $x(c_t)$ to be the hyperbolic center of $C_t$ and $c_t$ respectively. Similarly, let $x_1(C_t)$ and $x_1(c_t)$ be the first Euclidean coordinate of $x(C_t)$ and $x(c_t)$ respectively.
\end{itemize}
\end{definition}
\begin{remark}\label{rmk:conformal}Let $F\subset B(0,1)\subset \R^n$. Then, at every regular point $x\in\partial F$, the mean curvature $H$ is related with the Euclidean mean curvature $H^\text{flat}$ by
\[
H(x)=\frac{1-r^2}{2}H^\text{flat}(x)+(n-1)g_{\text{flat}}(x,\tilde\nu),
\]
where $\tilde\nu$ is the outward normal vector to $\partial F$ with Euclidean norm equal to one.
In particular, when $n=2$, denoting with $\kappa^{\text{flat}}$ the usual Euclidean curvature, one has that
\[
\kappa_\eta=\frac{1-\abs{\gamma(t)}^2}{2}\kappa^{\text{flat}}_\eta+g_{\text{flat}}(\eta,\tilde\nu),
\]
Therefore,  $\kappa^{\text{flat}}(c_t)=\kappa^{\text{flat}}_\gamma$, that is comparison circles $c_t$ and $C_t$ in the hyperbolic setting coincide with  comparison circles with respect to the Euclidean metric. From this formula, we also deduce that for any Euclidean circle $\mathscr{C}$
\[
\kappa_{\mathscr{C}}=\frac{1}{2}\Bigl(\frac{1-\abs{x_0}^2}{\tau_0}+\tau_0\Bigr)=\coth(\tau),
\]
where $x_0$ and $\tau_0$ are the Euclidean center and radius, and $\tau$ is the hyperbolic radius.
\end{remark}
\begin{lemma}\label{lem:const}On $t\in[0,a)$ it holds
\[
H=\kappa_\gamma+(n-2)\kappa(C_t).
\]
In particular,
\[
\Hc_f=\kappa_\gamma+(n-2)\kappa(C_t)+h'g_{H}(\nu,N)=\lambda,
\]
where $\nu=-\dot\gamma^\perp$.
\end{lemma}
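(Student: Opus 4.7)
The plan is to prove the first identity by passing to the Euclidean picture via Remark~\ref{rmk:conformal}, where the rotational symmetry of $\partial E$ renders the mean curvature transparent, and then to deduce the second identity from \eqref{eq:conserved_mean_curvature} and Theorem~\ref{thm:regularity}.

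Since $E$ is spherically symmetric about the $e_1$-axis, $\partial E$ is simultaneously a hyperbolic and a Euclidean hypersurface of revolution generated by $\gamma$ in the meridian plane. Its Euclidean principal curvatures at $\gamma(t)$ split into the meridian direction (with value $\kappa^{\text{flat}}_\gamma$) and the $n-2$ rotational directions, all sharing a common value by symmetry. To pin down the latter, I rotate the comparison circle $C_t$ (a Euclidean circle with Euclidean center on $e_1$, tangent to $\gamma$ at $\gamma(t)$, of Euclidean radius $\tau_0$) around the $e_1$-axis; the result is a genuinely round Euclidean sphere $\mathcal{S}_t\subset\R^n$ of radius $\tau_0$ centered on $e_1$. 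Both $\partial E$ and $\mathcal{S}_t$ contain the orbit of $\gamma(t)$, and the tangency of $C_t$ with $\gamma$ forces them to share the same tangent plane, hence the same Euclidean normal, at $\gamma(t)$. Meusnier's theorem applied to the common orbit curve on the two surfaces then forces their rotational Euclidean principal curvatures to agree; but all principal curvatures of $\mathcal{S}_t$ are $1/\tau_0=\kappa^{\text{flat}}(C_t)$. Therefore
\[
H^{\text{flat}}=\kappa^{\text{flat}}_\gamma+(n-2)\kappa^{\text{flat}}(C_t).
\]

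Next I apply Remark~\ref{rmk:conformal} in dimension $n$ to $\partial E$ and in dimension $2$ to the curves $\gamma$ and $C_t$ (which share the outward Euclidean unit normal $\tilde\nu$ at $\gamma(t)$ by tangency):
\[
H=\frac{1-\abs{\gamma(t)}^2}{2}H^{\text{flat}}+(n-1)g_{\text{flat}}(\gamma(t),\tilde\nu),
\]
together with the analogous 2D identities for $\kappa_\gamma$ and $\kappa(C_t)$. Substituting the expression for $H^{\text{flat}}$ and splitting $(n-1)=1+(n-2)$ in the boundary term regroups everything into $\kappa_\gamma+(n-2)\kappa(C_t)$, yielding the first formula.

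For the second identity, \eqref{eq:conserved_mean_curvature} reads $\Hc_f=H+\partial_\nu h$; since $h$ depends on $x$ only through $d_H(o,x)$ and $\nabla d_H(o,\cdot)=N$, we get $\partial_\nu h=h'\, g_H(\nu,N)$, while $\Hc_f=\lambda$ at regular points by Theorem~\ref{thm:regularity}. The orientation $\nu=-\dot\gamma^\perp$ is forced by the convention $\kappa_\eta\dot\eta^\perp=\nabla_{\dot\eta}\dot\eta$ together with the counter-clockwise parametrization of $\gamma$. The only delicate step is the Meusnier argument, where the fact that $C_t$ has its Euclidean center on $e_1$ is crucial: only then does its rotation yield an honest round Euclidean sphere, whose common principal curvature can be cleanly identified with $\kappa^{\text{flat}}(C_t)$.
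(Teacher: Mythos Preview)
Your proof is correct and follows the same route as the paper: establish the Euclidean identity $H^{\text{flat}}=\kappa^{\text{flat}}_\gamma+(n-2)\kappa^{\text{flat}}(C_t)$ for the surface of revolution, then use the conformal relation of Remark~\ref{rmk:conformal} together with the splitting $(n-1)=1+(n-2)$ to pass to the hyperbolic quantities. The only difference is cosmetic: the paper outsources the Euclidean formula to \cite[Proposition~3.1]{chambers}, whereas you supply a self-contained justification via Meusnier's theorem and the round sphere $\mathcal{S}_t$ obtained by rotating $C_t$; both lead to the same computation.
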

We call $H_1:=h'g_{H}(\nu,N)$  the term coming from the log-convex density.
\begin{proof}In \cite[Proposition 3.1]{chambers} it is shown that the Euclidean mean curvature of the spherically symmetric set $E$ can be computed as
\[
H^\text{flat}=\kappa_\gamma^\text{flat}+(n-2)\kappa^\text{flat}(C_t).
\]
Thanks to Remark \ref{rmk:conformal} we have that
\begin{align*}
H(\gamma(t))&=\frac{1-\abs{\gamma(t)}^2}{2}H^\text{flat}(\gamma(t))+(n-1)g_{\text{flat}}(\gamma(t),\nu)\\
&=\frac{1-\abs{\gamma(t)}^2}{2}\Bigl(\kappa_\gamma^\text{flat}+(n-2)\kappa^\text{flat}(C_t)\Bigr)+(n-1)g_{\text{flat}}(\gamma(t),\nu)\\
&=\kappa_\gamma+(n-1)\kappa(C_t).
\end{align*}
\end{proof}
\section{The proof}\label{sec:proof}The reduction to $ H_{\R}^2$ implies that Theorem \ref{thm:log_convex} is equivalent to showing that $\gamma$ represents a circumference centered in the origin. Essentially, we prove that ruling out this possibility, implies that $\gamma$ has to make a curl (see Figure \ref{fig:curl}), contradicting the fact that $\gamma$ parametrizes a spherically symmetric set. This is made rigorous by the combination of the next two lemmas.
\begin{lemma}\label{lem:spherically_monotonicity_tangent}For every $t\in(0,a)$
\[
g_{H}(N,\dot\gamma)\leq 0.
\]
\end{lemma}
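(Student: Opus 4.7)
The plan is to interpret $g_H(N,\dot\gamma)$ as the derivative of the radial distance $r(t):=d_H(\gamma(t),o)$ via the Riemannian identity $\nabla r = N$ on $H_\R^n\setminus\{o\}$, and then to show that $r\circ\gamma$ is monotone non-increasing on $[0,a]$. The monotonicity is essentially a consequence of the spherical symmetry of $\Omega$, once the correct polar picture is in place.

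First I would describe the component $\Omega_0$ of $\Omega$ containing $x^\star$ in polar coordinates $(r,\theta)$ centered at $o$. Spherical symmetry about the $e_1$-axis means that for each $r$ the slice $\Omega\cap\partial B(o,r)$, when nonempty, is a geodesic cap $\{|\theta|\leq\alpha(r)\}$ centered at the positive $e_1$-pole, for some angular half-opening $\alpha(r)\in[0,\pi]$. Restricting to $\Omega_0$, the same description holds with $r$ ranging over some interval $J\subset(0,r^\star]$ and $\alpha(r^\star)=0$. Thanks to Theorem \ref{thm:regularity}, combined with spherical symmetry and the reflection placing $x^\star$ on the positive $e_1$-axis, the global maximum of $d_H(\cdot,o)$ along $\partial\Omega_0$ equals $r^\star$ and is attained exactly at $x^\star$.

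Second, the upper boundary $\partial\Omega_0\cap\{y>0\}$ is precisely the polar graph
\[
\Gamma=\bigl\{(r,\alpha(r)):r\in J,\ 0<\alpha(r)<\pi\bigr\},
\]
so that for each admissible $r$ there is a single upper boundary point; in particular $r$ is injective along $\Gamma$. Since $\gamma$ is simple and $\gamma|_{[0,a]}$ traces the closure of $\Gamma$ starting from $\gamma(0)=x^\star$, the composition $t\mapsto r(\gamma(t))$ is a continuous injection of $[0,a]$ into $\R$ attaining its maximum at $t=0$, hence monotone non-increasing. Differentiating,
\[
g_H(N,\dot\gamma(t))=g_H(\nabla r,\dot\gamma(t))=\tfrac{d}{dt}r(\gamma(t))\leq 0
\]
for every $t\in(0,a)$.

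The main obstacle I expect is not the strategy itself but in making the polar description rigorous: namely, verifying that $\partial\Omega_0\cap\{y>0\}$ is really a graph over $r$ (without stray components or tangential contacts with the $e_1$-axis at interior times), and handling the edge cases $\alpha(r)\in\{0,\pi\}$. The smoothness of $\gamma$ on $(-a,a)$, together with the simplicity of the closed curve established in the reduction step, should be enough to absorb these technicalities.
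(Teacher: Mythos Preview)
Your approach is essentially the paper's: show that the radial distance along $\gamma|_{[0,a]}$ is non-increasing and then differentiate. The paper works with the Euclidean quantity $g_{\text{flat}}(\gamma(t),\gamma(t))$ in the Poincar\'e model (equivalent to your $d_H(\gamma(t),o)$ since $d_H(x,0)=2\arctanh|x|$ is monotone in $|x|$, and $g_H$ is a conformal rescaling of $g_{\text{flat}}$) and simply asserts the monotonicity from spherical symmetry in one line, without the polar-graph discussion you outline.
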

\begin{proof}The set $\Omega$ spherically symmetric implies that $t\mapsto g_{\text{flat}}(\gamma(t),\gamma(t))$ is non increasing. Differentiating in $t$ gives the desired sign of the angle between $N$ and $\dot\gamma$.
\end{proof}
Section \ref{sec:tangent} is devoted to the proof of the next lemma. 
\begin{lemma}[The tangent lemma]\label{lem:tangent}If $\gamma$ is not a circle centered in the origin, there exist $0<a_0<a_1<a_2<a$ such that $\dot\gamma(a_0)=X^\perp(\gamma(a_0))$, $\dot\gamma(a_1)=-X(\gamma(a_1))$ and $\dot\gamma(a_2)=X(\gamma(a_2))$.
\end{lemma}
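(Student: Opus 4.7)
The plan is to introduce the continuously lifted tangent angle $\beta: [0, a) \to \R$ defined by
\[
\dot\gamma(t) = \cos\beta(t)\, X(\gamma(t)) + \sin\beta(t)\, X^\perp(\gamma(t)),\qquad \beta(0) = 0,
\]
so that the three tangent conditions of the lemma translate, modulo $2\pi$, into $\beta(a_0) \equiv \tfrac{\pi}{2}$, $\beta(a_1) \equiv \pi$, and $\beta(a_2) \equiv 2\pi$. The lemma thus becomes the statement that $\beta$ successively attains $\tfrac{\pi}{2}$, $\pi$, and $2\pi$ on the open interval $(0, a)$ whenever $\gamma$ is not a geodesic sphere centered at $o$. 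Exploiting that integral curves of $X$ are geodesics (so $\nabla_X X = 0 = \nabla_X X^\perp$) and that integral curves of $X^\perp$ are hypercycles with hyperbolic curvature $\tanh(S)$, a direct frame computation produces the kinematic identity
\[
\kappa_\gamma(t) = \dot\beta(t) + \sin\beta(t)\, \tanh\bigl(S(\gamma(t))\bigr).
\]

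The existence of $a_0$ is the simple step. At $t = 0$ the point $\gamma(0) = x^\star$ lies on $e_1$ with $\dot\gamma(0) = X$, so the comparison circle $C_0$ degenerates to $e_1$ itself and $\kappa(C_0) = 0$. Combined with Lemma \ref{lem:const} and the mean-convexity $H \geq n - 1$ from Theorem \ref{thm:regularity}, this forces $\kappa_\gamma(0) \geq n - 1 > 0$, hence $\dot\beta(0) > 0$. Expressing $\dot\gamma(t)$ in Cartesian coordinates at any point of $e_1$ shows that the sign condition $\dot\gamma_2(a^-) \leq 0$, forced by $\gamma_2(t) > 0$ on $(0,a)$, places $\beta(a^-) \in [\tfrac{\pi}{2}, \tfrac{3\pi}{2}] \pmod{2\pi}$, so by continuity $\beta$ crosses $\tfrac{\pi}{2}$ somewhere in $(0, a)$; take $a_0$ to be the first such crossing.

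The main work is producing $a_1$ and $a_2$ under the non-circle assumption, and here I would argue by contradiction. Suppose $\beta(t) < \pi$ throughout $(0, a)$ (the parallel case $\beta(t) < 2\pi$ after $a_1$ is similar). Under this no-curl hypothesis, combined with Lemma \ref{lem:spherically_monotonicity_tangent} (so $t \mapsto d_H(\gamma(t), o)$ is non-increasing), the Euler–Lagrange identity
\[
\kappa_\gamma + (n-2)\kappa(C_t) + h'\, g_H(\nu, N) = \lambda
\]
couples the intrinsic curvature $\kappa_\gamma$ to the extrinsic geometry through comparison circles whose hyperbolic centers live on $e_1$. The idea is to track the motion of $x_1(C_t)$ along $e_1$ and combine it with the kinematic identity above and the strict convexity of $h$ — so that $h'$ is strictly monotone — forcing a rigidity that makes $\partial_\nu h$, and then $S(\gamma(t))$, constant along a non-trivial arc. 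This in turn pins $\gamma$ to a geodesic sphere centered at $o$, contradicting the hypothesis. Once $a_1$ is produced, the identical mechanism applied to the arc past $a_1$ delivers $a_2$.

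The technical heart, and the step I expect to be the main obstacle, is the rigidity argument of the previous paragraph. In the Euclidean setting, Chambers exploits the exact translational invariance of $\R^n$ along $e_1$ together with a clean monotonicity of the Euclidean osculating center $x_1(c_t)$. In $H_\R^n$ this is replaced by the one-parameter family of hyperbolic translations fixing the geodesic $e_1$, and the formula $\kappa_{\mathscr{C}} = \coth(\tau)$ of Remark \ref{rmk:conformal} shows that the curvature–radius relation for comparison circles is no longer a reciprocal. Carrying Chambers' monotonicity through in this conformal setting, while keeping the strict log-convexity of $h$ as the driving rigidity input, is where the real technical work concentrates.
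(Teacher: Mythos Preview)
Your outline has the right global shape but contains a factual error and, more importantly, misses the actual mechanism that drives the proof.

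First, a concrete mistake: at $t=0$ the comparison circle $C_0$ does \emph{not} degenerate to $e_1$ with $\kappa(C_0)=0$. Since $\dot\gamma(0)=X(\gamma(0))$ is perpendicular to $e_1$, every circle centered on $e_1$ through $\gamma(0)$ is tangent to $\gamma$ there; the correct resolution is that by symmetry $C_0=c_0$, the osculating circle, so $\kappa(C_0)=\kappa_\gamma(0)>1$ (a hyperbolic circle). Your conclusion $\dot\beta(0)>0$ happens to survive, but the reasoning is wrong. More seriously, an intermediate-value argument that $\beta$ hits $\tfrac{\pi}{2}$ somewhere gives you no structural control: the paper needs $a_0$ to be the endpoint of an ``upper curve'' interval $I_U$ on which $\kappa_\gamma\geq\kappa(C_t)>1$ and $x_1(C_t)$ is monotone, and these properties are essential input for the next step.

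The main gap is your plan for $a_1$. You propose a rigidity argument: track $x_1(C_t)$, use strict convexity of $h$ to force $\partial_\nu h$ constant on an arc, and conclude $\gamma$ is a centered circle. This is not what the paper does, and I do not see how to make it work --- there is no obvious reason why failing to reach $\beta=\pi$ should freeze $\partial_\nu h$. The paper's argument is fundamentally a \emph{comparison between two arcs of the same curve}. One defines a ``lower curve'' interval $I_L$ after $a_0$ on which $\dot\gamma$ is in the third quadrant and $\kappa_\gamma(t)\geq\kappa_\gamma(\bar t)$, where $\bar t\in I_U$ is the unique time with $S(\gamma(t))=S(\gamma(\bar t))$. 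Three separate comparison lemmas (for $\kappa_\gamma$, for $\kappa(C_t)$, and for the density term $H_1$), all phrased in terms of the hypercyclic foliation $(\delta_l)$, show that the lower curve curves \emph{strictly} faster than the reflected upper curve. If $a_1=a$ this forces $\gamma$ to meet $e_1$ at an angle strictly less than $-\tfrac{\pi}{2}$, contradicting the tangent-cone regularity of Theorem~\ref{thm:regularity}. The hypercyclic foliation and the pairing $t\leftrightarrow\bar t$ are the genuine hyperbolic substitutes for Chambers' Euclidean argument, and they are absent from your plan.

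Finally, $a_2$ is \emph{not} produced by ``the identical mechanism''. Once $\dot\gamma(a_1)=-X$, there is no upper curve left to compare against. The paper instead uses mean-convexity directly: on the ``curl'' interval after $a_1$ one has $\kappa_\gamma>0$ from $H\geq n-1$ and the sign of $\kappa(C_t)$, and a separate angle-monotonicity argument forces either $\dot\gamma=X$ at some $a_2$ or another bad incidence angle at $e_1$.
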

\begin{figure}[htbp]
\centering
\includegraphics[scale=0.7]{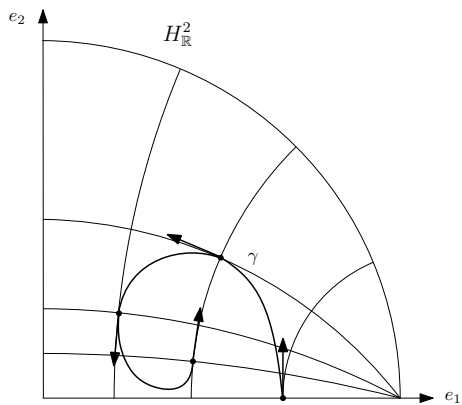}
\caption{The curl described in Lemma \ref{lem:tangent}.}
\label{fig:curl}
\end{figure}
Assuming now that Lemma \ref{lem:tangent} holds true, the proof of the main result goes as follows.
\begin{proof}[Proof  of Theorem \ref{thm:log_convex}]If $\gamma$ is a circle centered at the origin we are done. Otherwise, Lemma \ref{lem:tangent} ensures the existence of $0<a_2<a$ such that $\dot\gamma(a_2)=X(\gamma(a_2))$. This violates the inequality of Lemma \ref{lem:spherically_monotonicity_tangent}, because at $a_2$
\[
g_{H}(N,\dot\gamma)=g_{H}(N,X)>0.
\]
\end{proof}
\subsection{Proof of the tangent lemma}\label{sec:tangent}
The proof is made by following the behaviour of $\gamma$ step by step: first we show that $\gamma$ has to arch upwards with curvature strictly  greater than one. The endpoint of this arc will be $\gamma(a_0)$, where $\dot\gamma(a_0)=X^\perp(\gamma(a_0))$. Then, it goes down curving strictly faster than before, and this result about curvature comparison is the tricky point to  generalize in the  hyperbolic setting. It turns out that the  special frame given by the hypercyclical foliation $(\delta_l)_{l\in[0,1)}$ is the good one. Then, arguing by  contradiction, we will show that this behaviour must end at a point  $0<a_0<a_1<a$, where $\dot\gamma(a_1)=-X(\gamma(a_1))$. Finally, we prove the existence of $a_2$ so that $\dot\gamma(a_2)=X(\gamma(a_2))$ taking advantage of the mean-curvature convexity of $\Omega$. We start by looking at what happens at the  starting point.
\begin{lemma}\label{lem:first_dynamic}One has that $\dot\gamma(0)=X(\gamma(0))$, $\dot\kappa_\gamma(0)=0$ and $\kappa_\gamma(0)\geq\kappa(C_0)>1$.
\end{lemma}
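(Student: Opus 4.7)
The lemma makes three claims, which I would establish in sequence.

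For $\dot\gamma(0)=X(\gamma(0))$: since $\gamma(0)=x^\star$ realizes the supremum of $d_H(\cdot,o)$ on $\partial E$, the third item of Theorem \ref{thm:regularity} guarantees that $\gamma$ is smooth at $0$ and that $\dot\gamma(0)$ is $g_H$-orthogonal to the gradient of $d_H(\cdot,o)$ at $x^\star$, which is $N(x^\star)$. As $x^\star\in e_1$, the vector $N(x^\star)$ points along $e_1$, so $\dot\gamma(0)$ lies along $e_2$; the counter-clockwise orientation combined with $\sgn(\gamma_2(t))=\sgn(t)$ forces the direction to be $+e_2$. On $e_1$, $X$ is a positive rescaling of $(0,1)$, and both vectors have unit $g_H$-length, hence they coincide.

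For $\dot\kappa_\gamma(0)=0$: the spherical symmetrization of $E$ about the $e_1$-axis, restricted to the $(e_1,e_2)$-plane, induces a reflection symmetry of $\gamma$ across the $e_1$-geodesic. With the counter-clockwise arclength parametrization anchored at $x^\star\in e_1$, this symmetry reads $\gamma(-t)=\sigma(\gamma(t))$, where $\sigma$ is the reflection across $e_1$. Both $\sigma$ and the time reversal $t\mapsto -t$ are orientation reversing on planar curves, so their composition preserves the signed curvature: $\kappa_\gamma(-t)=\kappa_\gamma(t)$. Smoothness of $\kappa_\gamma$ at $0$ then forces $\dot\kappa_\gamma(0)=0$.

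For $\kappa_\gamma(0)\geq\kappa(C_0)>1$: at the axis point $\gamma(0)\in e_1$, the hypersurface $\partial E$ is smooth at a pole of its rotational symmetry, so all its principal curvatures there coincide by symmetry, giving $H(\gamma(0))=(n-1)\kappa_\gamma(0)$. Comparing with the formula $H=\kappa_\gamma+(n-2)\kappa(C_t)$ of Lemma \ref{lem:const} yields $\kappa(C_0)=\kappa_\gamma(0)$, so the first inequality is in fact an equality. For the strict inequality, the maximality of $d_H(\cdot,o)$ at $x^\star$ implies that $\Omega$ lies locally in the closed geodesic ball $\overline{B(o,r)}$ with $r=d_H(o,x^\star)>0$, and $\gamma$ is tangent at $\gamma(0)$ to its bounding geodesic circle, which has constant inward curvature $\coth(r)>1$. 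Plane curvature comparison with matching orientations gives $\kappa_\gamma(0)\geq\coth(r)>1$. The only subtle point I anticipate is the identification $\kappa(C_0)=\kappa_\gamma(0)$: Definition \ref{def:comparison_c} is formally ambiguous at the axis, since the normal to $\gamma$ at $\gamma(0)$ coincides with $e_1$ itself and every circle centered on $e_1$ through $\gamma(0)$ is tangent to $\gamma$ there; the pole-of-rotation argument sidesteps this ambiguity cleanly.
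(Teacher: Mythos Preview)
Your proposal is correct and follows the same two ingredients invoked in the paper's one-sentence proof: the reflection symmetry of $\gamma$ across $e_1$ and the fact that $\gamma(0)$ is the furthest point of $\Omega$ from the origin. The only minor deviation is in how you establish $\kappa_\gamma(0)=\kappa(C_0)$: the paper (later, in the proof of Lemma~\ref{lem:preliminary_prop}) simply records that $c_0=C_0$ by the planar reflection symmetry of $\gamma$, whereas you pass through the higher-dimensional hypersurface $\partial E$ and use that all principal curvatures agree at a pole of rotational symmetry. Both routes are correct; yours has the advantage of cleanly sidestepping the ambiguity in Definition~\ref{def:comparison_c} at the axis that you rightly flagged, while the paper's stays entirely within the planar picture.
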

\begin{proof}This is a consequence of the symmetry of $\gamma$ with respect to the $e_1$ axis, and that $\gamma(0)$ represent the furthest point from the origin of $\Omega$.
\end{proof}
\begin{lemma}\label{lem:uniqueness}If there exists $t^*\in[0,a)$ such that $x_1(C_{t^*})=0$ and $\kappa_\gamma(t^*)=\kappa(C_{t^*})$, then $\gamma$ has to be a centered circle.
\end{lemma}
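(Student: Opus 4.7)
The plan is to interpret the conservation law of Lemma \ref{lem:const} as a smooth second-order ODE whose local solutions are uniquely determined by the initial position and tangent, and then exhibit both $\gamma$ and a parametrization of the centered geodesic circle $C_{t^*}$ as solutions of this ODE with identical initial data at $t^*$.

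First, solve for $\kappa_\gamma$ in Lemma \ref{lem:const}:
\[
\kappa_\gamma(t) \;=\; \lambda - (n-2)\kappa(C_t) - h'(d_H(o,\gamma(t)))\,g_H(\nu(t), N(\gamma(t))).
\]
The right-hand side depends only on the position $\gamma(t)$ and the unit tangent $\dot\gamma(t)$. Indeed, $C_t$ is determined by prescribing tangency at $\gamma(t)$ in the direction $\dot\gamma(t)$ together with the requirement that its center lie on $e_1$, while $\nu(t)=-\dot\gamma(t)^\perp$ and $N$ is the fixed radial vector field. Using $\kappa_\gamma\,\dot\gamma^\perp=\nabla_{\dot\gamma}\dot\gamma$, the profile $\gamma$ therefore solves a smooth autonomous ODE of the form $\nabla_{\dot\gamma}\dot\gamma=F(\gamma,\dot\gamma)\,\dot\gamma^\perp$ on the unit tangent bundle, so Picard--Lindelöf yields local uniqueness of arclength parametrized solutions from the initial data $(\gamma(t_0),\dot\gamma(t_0))$.

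Second, I verify that the centered geodesic circle $C_{t^*}$ (centered at $o$ because $x_1(C_{t^*})=0$) solves the same equation with the same constant $\lambda$. At any point $p\in C_{t^*}$, the outward unit normal is radial, so $g_H(\nu,N)=\pm 1$ is constant; the comparison circle at $p$ coincides with $C_{t^*}$ itself by the rotational symmetry about $o$; and $h'(d_H(o,p))$ is constant along $C_{t^*}$. Hence $\Hc_f$ is constant on $C_{t^*}$. By the hypothesis $\kappa_\gamma(t^*)=\kappa(C_{t^*})$ and the tangency of $C_{t^*}$ to $\gamma$ at $t^*$, all three terms of $\Hc_f$ agree at $\gamma(t^*)$ between $\gamma$ and $C_{t^*}$, so this common constant value is precisely $\lambda$.

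Finally, reparametrize $C_{t^*}$ by arclength so that $C_{t^*}(t^*)=\gamma(t^*)$ and $\dot C_{t^*}(t^*)=\dot\gamma(t^*)$. The curves $\gamma$ and $C_{t^*}$ then satisfy the same ODE with identical initial conditions at $t^*$, so they coincide on a neighborhood. The set of $t\in(-a,a)$ at which $(\gamma,\dot\gamma)=(C_{t^*},\dot C_{t^*})$ is non-empty, open by local ODE uniqueness in the phase space, and closed by continuity; hence it is all of $(-a,a)$ and extends to $[-a,a]$ by the smoothness of $\gamma$ up to the endpoints. Therefore $\gamma$ parametrizes the circle $C_{t^*}$ centered at $o$. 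The only subtlety is the smoothness of the map $(p,v)\mapsto C(p,v)$ along the axis $e_1$, where the constraint ``center on $e_1$'' momentarily degenerates; this is harmless, because $x_1(C_{t^*})=0$ singles out the unique branch centered at $o$, relative to which the Picard--Lindelöf argument is applied.
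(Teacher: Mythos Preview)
Your argument is correct and follows essentially the same route as the paper: both treat the constancy of $\Hc_f$ as a second-order ODE for arclength-parametrized curves, observe that the centered circle $C_{t^*}$ solves it with the same constant $\lambda$, match initial data at $t^*$, and invoke local-to-global uniqueness. Your write-up simply fills in the details that the paper compresses into one sentence; the remark in your final paragraph about a degeneration of the map $(p,v)\mapsto C(p,v)$ along $e_1$ is slightly off (the construction only degenerates when $v$ is tangent to $e_1$, which does not occur along the centered circle), but this does not affect the validity of the argument.
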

\begin{proof}In this case $\gamma(t)$ and $C_{t^*}(s)$ solve the same ODE, with same initial data. Therefore, they have to coincide locally, and hence globally.
\end{proof}
\begin{definition}Call $\alpha:[0,a)\to [\pi,-\pi)$ the oriented angle made by $\dot\gamma$ with $X^\perp$. We say that $\dot\gamma(t)$ is in the I, II, III and IV quadrant if $\alpha(t)$ belongs to  $[\pi/2,\pi]$, $[0,\pi/2]$, $[0,-\pi/2]$ and $[-\pi/2,-\pi]$ respectively. We add \emph{strictly} if $\dot\gamma$ is not collinear to $X$ and $X^\perp$.
\end{definition}
\begin{lemma}\label{lem:center_C}If for $t\in[0,a)$, $\dot\gamma(t)$ belongs to the II quadrant, then $x_1(C_t)\geq 0$.
\end{lemma}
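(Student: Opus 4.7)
The plan is to translate the claim into a planar coordinate computation inside the Poincaré disk. Write $\gamma(t) = (x, y)$ with $y > 0$, and use the explicit Euclidean directions
\[
 X \,\propto\, (2xy,\,1-x^2+y^2), \qquad X^\perp \,\propto\, (x^2-y^2-1,\,2xy)
\]
(these are orthogonal Euclidean vectors of the same length, so in what follows I can suppress the common scaling factor). Decompose $\dot\gamma(t) = \cos\alpha\,X^\perp - \sin\alpha\,X$ with $\alpha \in [0,\pi/2]$ dictated by the II quadrant. By Remark \ref{rmk:conformal} the hyperbolic $C_t$ coincides with the Euclidean tangent circle at $\gamma(t)$ with Euclidean center on $e_1$, and its hyperbolic and Euclidean centers lie on $e_1$ with the same sign of first coordinate. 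Hence $x_1(C_t)$ is (up to sign) the $x$-intercept of the Euclidean perpendicular to $\dot\gamma(t)$ at $\gamma(t)$.

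A direct perpendicular-line calculation yields
\[
 x_1(C_t) \;=\; \frac{x(r^2-1)\cos\alpha - y(r^2+1)\sin\alpha}{(x^2-y^2-1)\cos\alpha - 2xy\sin\alpha}, \qquad r^2 := x^2+y^2.
\]
On the right half-disk ($x\geq 0$) both numerator and denominator are manifestly $\leq 0$, since $r^2-1<0$ and $x^2-y^2-1<0$ inside the open unit disk while $\cos\alpha,\sin\alpha,xy,y \geq 0$; this gives $x_1(C_t)\geq 0$ immediately. When $x<0$, Lemma \ref{lem:spherically_monotonicity_tangent} is the decisive input: unpacked in these coordinates, $g_H(N,\dot\gamma)\leq 0$ reads
\[
 x(1-r^2)\cos\alpha + y(r^2+1)\sin\alpha \;\geq\; 0,
\]
which is precisely the statement that the numerator of $x_1(C_t)$ is $\leq 0$.

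The main obstacle is then to show the denominator has the matching sign $\leq 0$ when $x<0$. At $\alpha = 0$ it equals $x^2-y^2-1<0$, but the second summand $-2xy\sin\alpha$, which is $\geq 0$ for $x<0$, can eventually dominate as $\alpha \nearrow \pi/2$, and spherical monotonicity alone does not rule this out. I would therefore combine monotonicity with the dynamical structure of Q2 in the proof of the tangent lemma: Q2 is entered at $a_0$ with $\dot\gamma(a_0)=X^\perp$, and the conserved mean-curvature equation \eqref{eq:conserved_mean_curvature} together with the earlier lemmas (in particular Lemmas \ref{lem:first_dynamic} and \ref{lem:uniqueness}) is used to propagate the sign of the denominator through Q2. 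A cleaner reformulation that may shortcut this bookkeeping is to read $x_1(C_t)\geq 0$ as asserting that the \emph{hyperbolic} geodesic normal to $\gamma$ at $\gamma(t)$ meets $e_1$ on its non-negative half, and to exploit the $e_1$-symmetry of $\gamma$ together with the starting condition $\gamma(0)=x^\star\in e_1^+$ to transport this geometric property from $a_0$ across Q2.
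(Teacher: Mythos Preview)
Your coordinate computation is fine for $x\geq 0$, but the proof has a genuine gap at $x<0$, and the fix you sketch is circular: the point $a_0$ and the ``dynamical structure of Q2'' are established in Proposition~\ref{prop:up}, whose proof \emph{uses} the present lemma. You cannot invoke the tangent-lemma machinery here.

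The missing observation is that the case $x<0$ simply does not occur. Write $N$ in the $\{X,X^\perp\}$ frame; Lemma~\ref{lem:spherically_monotonicity_tangent} gives
\[
0\;\geq\; g_H(N,\dot\gamma)\;=\;g_H(X,N)\sin\alpha+g_H(X^\perp,N)\cos\alpha.
\]
On $(H_{\R}^2)_+$ one always has $g_H(X,N)>0$, so for $\alpha\in[0,\tfrac{\pi}{2})$ this forces $g_H(X^\perp,N)\leq 0$, which in your explicit coordinates reads $x(x^2+y^2-1)\leq 0$, i.e.\ $\gamma_1(t)=x\geq 0$. (For $\alpha=\tfrac{\pi}{2}$ one gets $g_H(X,N)\leq 0$, possible only on $e_1$, i.e.\ at $t=0$.) So your own $x\geq 0$ branch already covers every interior $t$. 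The paper proceeds in exactly this way, then reads off $x_1(C_t)\geq 0$ via hyperbolic trigonometry rather than your Euclidean intercept formula; the two routes are equivalent once $x\geq 0$ is known. Note also that the endpoint $t=0$ deserves a separate sentence: there $C_0=c_0$ approximates $\gamma$ to fourth order, and $x_1(C_0)<0$ would push $\gamma$ outside the ball of radius $d_H(\gamma(0),o)$ near $0$, contradicting the choice of $x^\star$.
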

\begin{proof}
We first treat the case $t\in(0,a)$. Expressing $N(\gamma(t))$ in the $\{X,X^\perp\}$ frame, we have thanks to Lemma \ref{lem:spherically_monotonicity_tangent} that
\begin{equation}\label{eq:angles}
\begin{split}
0\geq g_H(N,\dot\gamma)&=g_H(X,N)g_H(X,\dot\gamma)+g_H(X^\perp,N)g_H(X^\perp,\dot\gamma)\\
&=g_H(X,N)\sin(\alpha)+g_H(X^\perp,N)\cos(\alpha).
\end{split}
\end{equation}
If $\alpha=\pi/2$, then 
\[
0\geq g_H(X,N),
\]
which is possible only when $\gamma_2(t)=0$, that is $t\in\{0,a\}$. If $\alpha\in [0,\pi/2)$ then $\cos(\alpha)>0$, implying that $g_H(X^\perp,N)\leq-g(X,N)\sin(\alpha)\leq 0$. Notice that this is possible only if $\gamma_1(t)\geq 0$. Calling $-\vartheta<0$ the angle that $N$ makes with $X^\perp$, we get by Equation \eqref{eq:angles} that
\begin{equation}\label{eq:ANGLES}
\tan(\alpha)\leq\tan(\vartheta).
\end{equation}
Now, observe that the two geodesic rays $\sigma_\gamma$, $\sigma_N$ stating at $\gamma(t)$ with initial velocities $\dot\sigma_\gamma(0)=\dot\gamma^\perp(t)$ and $\dot\sigma_N(0)=N^\perp(\gamma(t))$, together with the axis $e_1$ and the geodesic orthogonal to $e_1$ passing from $\gamma(t)$ bound two geodesic triangles $\triangle_\gamma$ and $\triangle_N$. Call $d$ the distance between $\gamma(t)$ and $e_1$. Then, the length of the sides $\ell_\gamma$ and $\ell_N$ of $\triangle_\gamma$ and $\triangle_N$ respectively, lying on $e_1$ are given via hyperbolic trigonometric laws by
\[
\tanh(\ell_\gamma)=\tan(\alpha)\sinh(d)\leq\tan(\vartheta)\sinh(d)=\tanh(\ell_N).
\]
But this implies that $x(C_t)$, which is the intersection of $\sigma_\gamma$ with $e_1$, has first coordinate positive, as claimed.
If $t=0$, then $C_0=c_0$ and approximates $\gamma(0)$ up to the fourth order. Therefore, if $x_1(C_t)<0$, then there exists $\varepsilon>0$ such that $\gamma\lvert_{(\varepsilon,2\varepsilon)}$ lies outside the ball of centered in the origin and with radius $d_H(\gamma(0),o)$. This is a contradiction because by construction $\gamma(0)$ is the furthest point of $\Omega$ from $o$.
\end{proof}
Our next goal is to show four important properties of the curve $\gamma$. The proof is made by comparison with the circles $c_t$ and $C_t$, and the preservation of the weighted mean curvature $\Hc_f$. For this reason, we need the following preliminary lemma.
\begin{lemma}\label{lem:H1_circle}Let $\eta=\eta(s)$ be an arc-length, counter-clockwise parametrization of a circle centered at $(0,y)$ such that $\eta(0)=(\tau,y)$ and $\eta(L)=(0,y+\tau)$. Let $O=(-\tilde o,0)$ be an arbitrary point lying on $e_1$ with $\tilde o\in [0,1)$, and $\nu(s)$ the outward pointing normal to $\eta(s)$. Then, setting
\[
\tilde H_1(s):=\partial_\nu (h(d_{H}(\eta(s),O))),
\]
if $y=0$ then
\begin{equation}\label{eq:sgn_H1}
\tilde H_1'(s)\leq 0,\text{ in }(0,L],
\end{equation}
and
\begin{equation}\label{eq:sgn_H12}
\tilde H''_1(0)\leq 0.
\end{equation}
Both inequalities are strict if $\tilde o\neq 0$.
If $y\in(0,1)$ and $\tilde o\neq 0$, then
\begin{equation}\label{eq:sgn_H123}
\tilde H'_1(L)<0.
\end{equation}
\end{lemma}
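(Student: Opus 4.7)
The plan is to derive a master expression for $\tilde H_1'(s)$ by direct differentiation along $\eta$. Writing $\phi := h\circ d_H(\cdot, O)$, so that $\tilde H_1 = g_H(\nabla\phi,\nu)$, I will use the constant-curvature Hessian identity
\[
\hess\phi = h''(r)\,dr\otimes dr + h'(r)\coth(r)\,(g_H - dr\otimes dr),\qquad r = d_H(\cdot, O),
\]
together with $\nabla_{\dot\eta}\nu = \kappa_\eta\,\dot\eta$ (available because $\eta$ is a circle of constant hyperbolic curvature $\kappa_\eta$) and $g_H(\dot\eta,\nu)=0$. Differentiating $\tilde H_1 = h'(r)g_H(N_O,\nu)$ then collapses to the single formula
\[
\tilde H_1'(s) = g_H(N_O,\dot\eta)\,\Bigl\{h''(r)g_H(N_O,\nu) + h'(r)\bigl(\kappa_\eta - g_H(N_O,\nu)\coth(r)\bigr)\Bigr\},
\]
and the proof reduces to controlling the signs of the two factors case by case.

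For \eqref{eq:sgn_H1} and \eqref{eq:sgn_H12} ($y=0$), the curve $\eta$ is a hyperbolic circle centered at $o$, hence $\nu = N$ and $\kappa_\eta = \coth\tau'$ with $\tau' = 2\arctanh(\tau)$; moreover $g_H(N_O,\nu)$ is the cosine of the vertex angle at $\eta(s)$ in the geodesic triangle $\{o,\eta(s),O\}$. The hyperbolic law of cosines collapses the bracket into
\[
\kappa_\eta - g_H(N_O,\nu)\coth(r) = \frac{\cosh(r)\cosh(\tilde o')-\cosh(\tau')}{\sinh(\tau')\sinh^2(r)},
\]
whose numerator is nonnegative thanks to $\cosh(r)\geq\cosh(\tau')\cosh(\tilde o')$, strict if $\tilde o>0$. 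The factor $g_H(N_O,\dot\eta) = r'(s)$ is nonpositive since $r(s)$ decreases monotonically from the point of $\eta$ farthest from $O$ toward the closest one on the arc, strict on $(0,L]$ for $\tilde o\neq 0$. Multiplying yields \eqref{eq:sgn_H1}. For \eqref{eq:sgn_H12}, both $r'(0)=0$ and the angular derivative vanish at $s=0$; a second differentiation leaves only $\tilde H_1''(0)=r''(0)\,B(0)$, with $r''(0)\leq 0$ by maximality of $r$ at $s=0$ and $B(0)\geq 0$ the bracket evaluated there.

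The main obstacle is \eqref{eq:sgn_H123}, where $\eta$ is no longer centered at $o$. At the single point $\eta(L)=(0,y+\tau)$, the $e_2$-symmetry still forces $\nu = N$ there, and the triangle $\{o,\eta(L),O\}$ becomes a hyperbolic right triangle at $o$. Napier's relations yield $\cosh(r(L))=\cosh(w')\cosh(\tilde o')$ and the compact identity $g_H(N_O,\nu)|_{L}=\tanh(w')/\tanh(r(L))$, with $w'=d_H(o,\eta(L))$. Writing $\kappa_\eta = (1+\tau^2-y^2)/(2\tau) = \coth(R)$, where $R$ is the hyperbolic radius of $\eta$, strict positivity of the bracket at $L$ reduces, after clearing denominators, to
\[
\tanh^2(r(L))>\tanh(R)\tanh(w').
\]
The crucial subsidiary inequality is $R<w'$ strictly, which follows from an elementary algebraic simplification showing that $\tanh(w')-\tanh(R)$ has the same sign as $y\bigl[1-(y+\tau)^2\bigr]>0$ under the hypotheses. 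Combined with $r(L)>w'$, a consequence of the right-triangle relation and $\tilde o\neq 0$, this delivers the bracket's strict positivity, while the outer factor $g_H(N_O,\dot\eta(L))<0$ is also strict, proving \eqref{eq:sgn_H123}.
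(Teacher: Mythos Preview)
Your argument is correct and follows essentially the same route as the paper: differentiate $\tilde H_1$ along $\eta$, factor the result as $g_H(N_O,\dot\eta)$ times a bracket, and control the signs of the two factors. Your Hessian identity for $h\circ d_H(\cdot,O)$ repackages exactly the covariant computation the paper carries out with $\nabla_{\dot\eta}T_*N$ (their $\kappa_1$ is your $\coth(r)$), so the master formula is identical.

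The execution differs in two places. For \eqref{eq:sgn_H12} the paper expands $\tilde H_1''$ fully and reduces, after cancellations at $s=0$, to computing $\frac{d}{ds}\big|_{s=0}g_H(T_*N,N^\perp)=\kappa_1-\kappa_\eta<0$; your factorisation $\tilde H_1'=r'\,B$ and the observation $r'(0)=0$ reach the same conclusion more directly, since $r''(0)=\coth(r(0))-\kappa_\eta<0$ is that very quantity. For \eqref{eq:sgn_H123} the paper simply asserts that the sign analysis from the $y=0$ case carries over because $\dot\eta(L)=N^\perp$, implicitly using $\kappa_\eta>\kappa_1$ at $s=L$; your explicit hyperbolic-trigonometric chain $R<w'<r(L)$ (via the algebraic identity $\tanh w'-\tanh R=\dfrac{2y(1-(y+\tau)^2)}{(1+(y+\tau)^2)(1+\tau^2-y^2)}>0$ and the right-triangle relation $\cosh r(L)=\cosh w'\cosh\tilde o'$) supplies this inequality rigorously. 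So your version is a fleshed-out variant of the paper's proof rather than a genuinely different method.
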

\begin{proof}Let $T: H_{\R}^2\to H_{\R}^2$ be the unique isometry fixing $e_1$ and sending the origin to $O$. Then,
\begin{align*}
\tilde H'_1(s)&=h''g_H(T_* N,\dot\eta)g_H(\nu,T_*N)+h'\frac{d}{ds}g_H(\nu,T_*N)\\
&=h''g_H(T_* N,\dot\eta)g_H(\nu,T_*N)-h'\Bigl(g_H(\nabla_{\dot\eta}\dot\eta^\perp,T_*N)+g_H(\dot\eta^\perp,\nabla_{\dot\eta}T_*N)\Bigr)\\
&=h''g_H(T_* N,\dot\eta)g_H(\nu,T_*N)-h'\Bigl(-g_H(T_*N,\dot\eta)\kappa_\eta+g_H(T_*N,\dot\eta)g_H(T_*N^\perp,\dot\eta)\kappa_1\Bigr),
\end{align*}
where $\kappa_1$ is the curvature of the integral curve of $T_*N^\perp$ passing through $\eta(s)$, which is a geodesic sphere centered at $O$. Suppose first that $y=0$ and $\tilde o\neq 0$. Then, $\dot\eta=N^\perp$, and
\[
\tilde H_1'(s)=h''g_H(T_*N,N^\perp)g_H(T_*N,N)-h'g_H(T_*N,N^\perp)(-\kappa_\eta+g_H(T_*N,N)\kappa_1)<0,
\]
because $h''>0$, $h'>0$, $g_H(T_*N,N^\perp)<0$, $g_H(T_*N,N)>0$  and $\kappa_\eta>\kappa_1$ since $\tilde o\neq 0$.
This proves Equation \eqref{eq:sgn_H1} when $\tilde o\neq 0$. The same holds in the context of Equation \eqref{eq:sgn_H123} since $\dot\eta(L)=N^\perp$. Up to relaxing the inequalities, the proof when $\tilde o=0$ is exactly the same.
To prove Equation \eqref{eq:sgn_H12}, we differentiate $\tilde H_1$ one more time, obtaining
\begin{align*}
\tilde H''_1(s)&=h'''g_H(T_* N,\dot\eta)^2g_H(\nu,T_*N)+h''\frac{d}{ds}g_H(T_*N,\dot\eta)g_H(\nu,T_*N)\\
&\quad+h''g_H(T_*N,\dot\eta)\frac{d}{ds}g_H(\nu,T_*N)+h''g_H(T_*N,\dot\eta)\frac{d}{ds}g_H(\nu,T_*N)\\
&\quad+h'\frac{d^2}{ds^2}g_H(\nu,T_*N).
\end{align*}
Observe that in zero $g_H(T_*N,\dot\eta)=0$, hence only the second and last term survive
\[
\tilde H_1''(0)=h''\frac{d}{ds}\Big\lvert_{s=0}g_H(T_*N,N^\perp)g_H(T_*N,N)+ h'\frac{d^2}{ds^2}\Big\lvert_{s=0}g_H(T_*N,N).
\]
Taking advantage of the explicit expression for $\frac{d}{ds}g(T_*N,N)$ we obtained before, we get
\begin{align*}
\frac{d^2}{ds^2}\Big\lvert_{s=0}g_H(T_*N,N)&=-\frac{d}{ds}\Big\lvert_{s=0}\Bigl(g_H(T_*N,N^\perp)\bigl(-\kappa_\eta+g_H(T_*N,N)\kappa_1\bigr)\Bigr)\\
&=\bigl(\kappa_\eta-g_H(T_*N,N)\kappa_1\bigr)\frac{d}{ds}\Big\lvert_{s=0}g_H(T_*N,N^\perp),
\end{align*}
which implies that
\[
\tilde H''_1(0)=\underbrace{\bigl(h''g_H(T_*N,N)+h'\kappa_\eta-h'g_H(T_*N,N)\kappa_1\bigr)}_{> 0}\frac{d}{ds}\Big\lvert_{s=0}g_H(T_*N,N^\perp).
\]
Hence, we are left to show that $\frac{d}{ds}\Big\lvert_{s=0}g_H(T_*N,N^\perp)<0$. Developing again we get
\begin{align*}
\frac{d}{ds}\Big\lvert_{s=0}g_H(T_*N,N^\perp)&=g_H(\nabla_{N^\perp}T_*N,N^\perp)+g_H(T_*N,\nabla_{N^\perp}N^\perp)\lvert_{s=0}\\
&=g_H(T_*N,N)^2\kappa_1\lvert_{s=0}-g_H(TN,N)\kappa_\eta\lvert_{s=0}\\
&=\kappa_1-\kappa_\eta<0.
\end{align*} 
\end{proof}
We are now ready to prove the next result.
\begin{lemma}\label{lem:preliminary_prop}The following four points hold.
\begin{enumerate}[i.]
\item\label{itm:center_C}If for $t\in(0,a)$ one has that $\kappa_\gamma(t)\geq\kappa(C_t)>1$, then $t\mapsto x_1(C_t)$ is smooth and $\frac{d}{dt}x_1(C_t)\geq 0$.
\item\label{itm:existence_I_U} If $\gamma$ is not a centered circle, then $\ddot\kappa_\gamma(0)>0$.
\item \label{itm:_U_end} If for $t\in(0,a)$, $\dot\gamma(t)$ is in the II quadrant and $\kappa_\gamma(t)=\kappa(C_t)>1$, then $\dot\kappa_\gamma(t)\geq0$. Moreover, if $\dot\gamma(t)\neq X^\perp(\gamma(t))$ and $C_t$ is not centered in the origin, then $\dot\kappa_\gamma(t)>0$.
\item \label{itm:I_L_big}If for $t\in(0,a)$ one has that $\dot\gamma(t)=X^\perp(\gamma(t))$, $\gamma_1(t)>0$ and $\kappa_\gamma(t)\geq\kappa(C_t)>1$,  then $\dot\kappa_\gamma(t)>0$.
\end{enumerate}
\end{lemma}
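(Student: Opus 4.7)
The plan is to prove the four parts in order, deriving each by differentiating the conservation law $\Hc_f = \kappa_\gamma + (n-2)\kappa(C_t) + H_1 \equiv \lambda$ of Lemma \ref{lem:const} and controlling the resulting summands via Lemma \ref{lem:H1_circle}, elementary comparison geometry, and Lemma \ref{lem:center_C}. The guiding philosophy, following \cite{chambers}, is that every geometric claim about $\gamma$ translates into an analytic identity after differentiating this conservation law.

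For \textbf{(i)}, the hypothesis $\kappa(C_t) > 1$ guarantees that the hyperbolic radius $\tau(t) = \arctanh(1/\kappa(C_t))$ is finite and smooth, so the closed expression
\[
x(C_t) = \exp_{\gamma(t)}\bigl(-\tau(t)\,\nu(t)\bigr)
\]
for the hyperbolic center exhibits $t \mapsto x_1(C_t)$ as a smooth function. Differentiating it and substituting $\dot\kappa(C_t) = -\frac{1}{n-2}(\dot\kappa_\gamma + \dot H_1)$ from the conservation law, the sign of $\frac{d}{dt}x_1(C_t)$ can be read off using $\kappa_\gamma \geq \kappa(C_t)$, in close analogy with the Euclidean monotonicity argument of \cite[Proposition 3.1]{chambers}.

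For \textbf{(ii)}, the reflection symmetry of $\gamma$ across $e_1$ makes $\kappa_\gamma(t)$, $\kappa(C_t)$ and $H_1(t)$ even functions of $t$, so their first derivatives vanish at $t=0$. Differentiating the conservation law twice at $t=0$ gives
\[
\ddot\kappa_\gamma(0) = -(n-2)\,\ddot\kappa(C_0) - \ddot H_1(0).
\]
Since $\gamma$ is not a centered circle, Lemma \ref{lem:uniqueness} forces the center of $C_0 = c_0$ to be distinct from $o$; after conjugating by the hyperbolic translation along $e_1$ that brings this center to the origin, equation \eqref{eq:sgn_H12} of Lemma \ref{lem:H1_circle} applies with $\tilde o \neq 0$ and yields $\ddot H_1(0) < 0$ strictly (the third-order contact of $\gamma$ with $c_0$ at $t = 0$, coming from the symmetry, makes $\ddot H_1$ along $\gamma$ equal to $\ddot H_1$ along $c_0$). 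A short direct computation gives also $\ddot\kappa(C_0) \leq 0$, delivering the strict positivity of $\ddot\kappa_\gamma(0)$.

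Parts \textbf{(iii)} and \textbf{(iv)} both follow by differentiating the conservation law once at the prescribed $t$. In (iii), the equality $\kappa_\gamma = \kappa(C_t)$ makes $c_t$ and $C_t$ coincide; after the isometry aligning the common circle with the standard form of Lemma \ref{lem:H1_circle}, equation \eqref{eq:sgn_H1} provides the sign $\dot H_1(t) \leq 0$, while (i) controls the sign of $\dot\kappa(C_t)$, producing $\dot\kappa_\gamma(t) \geq 0$ with strictness under the additional hypothesis via the strict cases of the lemma. In (iv), the assumption $\dot\gamma(t)=X^\perp(\gamma(t))$ places $\gamma(t)$ at the configuration of the quarter-point $\eta(L)$ of Lemma \ref{lem:H1_circle}, and the hypothesis $\gamma_1(t) > 0$ guarantees that after the isometry the image of $o$ has $\tilde o \neq 0$; so equation \eqref{eq:sgn_H1} at $s = L$ gives $\dot H_1(t) < 0$ strictly, and combining with (i) forces $\dot\kappa_\gamma(t) > 0$. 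I expect the main obstacle to be in (i): adapting the Euclidean monotonicity argument of \cite{chambers} to the hyperbolic setting requires careful bookkeeping of the conformal factor $\frac{1-r^2}{2}$ from Remark \ref{rmk:conformal} and the interplay between the radial frame $\{N, N^\perp\}$ and the hypercyclic frame $\{X, X^\perp\}$, which is the essential structural feature distinguishing the hyperbolic from the Euclidean setting.
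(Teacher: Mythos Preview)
Your overall plan is right in spirit, but there are two concrete gaps.

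First, in (i) the conservation law is the wrong tool. The quantity $\kappa(C_t)$ is determined by the position and tangent of $\gamma$ alone, so $\dot\kappa(C_t)$ is a function of $\gamma(t),\dot\gamma(t),\kappa_\gamma(t)$; substituting $\dot\kappa(C_t)=-\tfrac{1}{n-2}(\dot\kappa_\gamma+\dot H_1)$ replaces a directly computable quantity by two less controllable ones, in particular pulling in $\dot H_1$, which depends on the density $h$ and has no a~priori sign. The paper's proof of (i) does not touch the conservation law: since the osculating circle $c_t$ approximates $\gamma$ to third order, one may replace $\gamma$ by $c_t$; after the hyperbolic translation along $e_1$ moving $x(c_t)$ onto $e_2$, the conformal equivalence of Remark~\ref{rmk:conformal} (Euclidean and hyperbolic comparison circles coincide) reduces the claim to the purely Euclidean computation of \cite[Lemma~5.3]{chambers}, where the hypothesis $\kappa_\gamma\geq\kappa(C_t)$ suffices.

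Second, in (iii) and (iv) you invoke (i) to control the sign of $\dot\kappa(C_t)$, but (i) concerns $\tfrac{d}{dt}x_1(C_t)$, not $\dot\kappa(C_t)$; the center can drift right while the radius grows or shrinks, so no such implication holds. The actual mechanism is that $\dot\kappa(C_t)$ \emph{vanishes}. In (iii), the equality $\kappa_\gamma(t)=\kappa(C_t)$ forces $c_t=C_t$, so $c_t$ approximates $\gamma$ to third order; along $C_t$ itself $\kappa(C_s)$ is constant, whence $\dot\kappa(C_t)=0$. In (iv), the hypothesis $\dot\gamma(t)=X^\perp(\gamma(t))$ places $\gamma(t)$ at the top of $c_t$, and the reflection symmetry of $c_t$ about its vertical diameter makes $s\mapsto\kappa(C_s)$ even there, so again $\dot\kappa(C_t)=0$. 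In both cases the differentiated conservation law collapses to $\dot\kappa_\gamma(t)=-\dot H_1(t)$, and Lemma~\ref{lem:H1_circle} supplies the sign: equation~\eqref{eq:sgn_H1} in case (iii), and equation~\eqref{eq:sgn_H123} (not~\eqref{eq:sgn_H1}) in case (iv) when $\kappa_\gamma(t)>\kappa(C_t)$, since then the center of $c_t$ lies strictly above $e_1$. Your treatment of (ii) matches the paper's; note that the same fourth-order contact argument actually gives $\ddot\kappa(C_0)=0$, not merely $\leq 0$.
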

\begin{proof}
We start with point \ref{itm:center_C}. Observe that since $c_t$ approximates $\gamma$ up to the third order around $\gamma(t)$, it suffices to prove $\frac{d}{dt}x_1(C_t)\geq 0$ replacing $\gamma$ with $c_t$. Also, we can suppose $x(c_t)$ on $e_2$ by composing with the unique hyperbolic isometry translating $x(c_t)$ on $e_2$ and fixing $e_1$. The curvature condition $\kappa_\gamma(t)\geq\kappa(C_t)>1$ ensures that $x(c_t)\in (H^2_{\R})_+$. By monotonicity of the function $\tanh(\cdot/2)$, it suffices to prove the claim for the Euclidean center of $C_t$. Thus, we have reduced the problem to an explicit computation in the Euclidean plane, that can be found in \cite[Lemma 5.3]{chambers}. Thanks to Lemma \ref{lem:H1_circle} the proofs of the other points go exactly as in \cite[Lemma 3.4, Lemma 3.5 and Lemma 3.7]{chambers}. We show point \ref{itm:existence_I_U}. Differentiating $\Hc_f$ twice, we get that
\[
\ddot\kappa_\gamma(0)=-(n-2)\ddot\kappa(C_0)-H_1''.
\]
By symmetry, $c_0=C_0$. Moreover, since $\dot\kappa_\gamma(0)=0$, we have that both $c_0$ and $C_0$ approximate $\gamma$ up to the fourth order near zero. Hence, $\ddot\kappa(C_0)=0$. Therefore, it suffices to determine the sign of $H_1''$ replacing $\gamma$ with $C_0$. Let $T: H_{\R}^2\to H_{\R}^2$ be the unique isometry fixing $e_1$ that moves $x(C_0)$ to the origin. The result follows by Equation \eqref{eq:sgn_H12} of Lemma \ref{lem:H1_circle} setting $O=T(0)$, and noticing that $T(0)\neq 0$ by Lemma \ref{lem:uniqueness}. The proofs of points \ref{itm:_U_end}. and \ref{itm:I_L_big}. are similar: in the first case the condition $\kappa_\gamma(t)=\kappa(C_t)$ implies that $c_t=C_t$ approximates $\gamma$ near $t$ up to the third order, the same holds if $\dot\gamma(t)=X^\perp(\gamma(t))$ by symmetry. Hence, substituting $\gamma$ with $c_t$ and differentiating one time $\Hc_f$, we have to determine the sign of $H_1'$ in the case of a circle, via Lemma \ref{lem:H1_circle}.
\end{proof}
We are now ready to analyse the first behaviour of $\gamma$.
\begin{definition}[Upper curve]\label{def:I_U}The upper curve is the (possibly empty) maximal connected interval $I_U\subset[0,a)$ such that $0\in I_C$ and for all $t\in I_U$
\begin{enumerate}[a.]
\item\label{itm:1_IU}$\dot\gamma(t)$ is in the II quadrant,
\item\label{itm:2_IU}$\kappa_\gamma(t)\geq\kappa(C_t)>1$,
\item\label{itm:3_IU}$t\mapsto x_1(C_t)$ is smooth and $\frac{d}{dt}x_1(C_t)\geq 0$.
\end{enumerate}
\end{definition}
We set
\[
a_0:=\sup I_U.
\]
In the discussion, we will sometimes identify the upper curve with its image through $\gamma$.
\begin{definition}\label{def:UC}We say that a curve $\eta$ is graphical with respect to the hypercyclic foliation $(\delta_l)_{l\in[0,1)}$ if $\eta$ meets each $\delta_l$ at most once.
\end{definition}
 Notice that the upper curve (if non empty) is graphical with respect to the hypercyclical foliation because $\dot\gamma$ is in the II quadrant
\begin{proposition}\label{prop:up}The upper curve is non empty and enjoys the following properties
\begin{enumerate}[i.]
\item$0<a_0<a$,
\item$a_0\in I_U$,
\item$\gamma_1(a_0)>0$,
\item$\dot\gamma(a_0)=X^\perp(\gamma(a_0))$.
\end{enumerate}
\end{proposition}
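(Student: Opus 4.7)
The plan is to follow the case analysis of Chambers \cite{chambers}, with Lemmas \ref{lem:first_dynamic}, \ref{lem:uniqueness}, \ref{lem:center_C}, \ref{lem:H1_circle}, and \ref{lem:preliminary_prop} serving as the hyperbolic substitutes for his tools. The skeleton has three blocks: first establish $I_U\neq\emptyset$ and $a_0<a$; then identify the breakdown at $a_0$ as $\dot\gamma(a_0)=X^\perp(\gamma(a_0))$, which simultaneously yields $a_0\in I_U$; and finally deduce $\gamma_1(a_0)>0$.

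For non-emptiness, Lemma \ref{lem:first_dynamic} gives $\dot\gamma(0)=X(\gamma(0))$ (so $\alpha(0)=\pi/2$), $\dot\kappa_\gamma(0)=0$, and $\kappa_\gamma(0)\geq\kappa(C_0)>1$. By the axial symmetry across $e_1$, $c_0=C_0$ and $\ddot\kappa(C_0)=0$; combined with $\ddot\kappa_\gamma(0)>0$ from point ii.\ of Lemma \ref{lem:preliminary_prop}, this gives $\kappa_\gamma(t)-\kappa(C_t)=\tfrac{1}{2}\ddot\kappa_\gamma(0)\,t^{2}+O(t^{3})>0$ on a right neighborhood of $0$, i.e.\ condition (b). Condition (a) holds because $\dot\gamma$ rotates at angular speed $\kappa_\gamma(0)>0$ into the strict II quadrant, and (c) follows from (b) via point i.\ of Lemma \ref{lem:preliminary_prop}. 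For $a_0<a$, $\gamma(a)\in e_1$ and $\gamma_2>0$ on $(0,a)$ force $\dot\gamma$ to acquire a strictly negative $X$-component before $t=a$, hence to leave II.

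The central step is the breakdown analysis at $a_0$. Since (c) is a consequence of (b), only (a) or (b) can degenerate. I would rule out each alternative to $\dot\gamma(a_0)=X^\perp$:
\begin{itemize}
\item $\alpha(a_0)=\pi/2$ (i.e.\ $\dot\gamma$ returns to $X$): writing $\dot\gamma=\cos(\alpha)X^\perp+\sin(\alpha)X$, the rotation rate $\dot\alpha$ in the $\{X,X^\perp\}$ frame equals $-\kappa_\gamma$ plus a connection correction governed by the hypercyclical curvature $K_1=2l/(1+l^2)<1$. Since $\kappa_\gamma>1$ throughout $I_U$, this yields $\dot\alpha<0$, so $\alpha$ is strictly decreasing from $\pi/2$ and cannot revisit it.
\item $\kappa_\gamma(a_0)=\kappa(C_{a_0})>1$ with $\alpha(a_0)\in(0,\pi/2)$: then $c_{a_0}=C_{a_0}$; Lemma \ref{lem:uniqueness} and the running assumption of Lemma \ref{lem:tangent} that $\gamma$ is not a centered circle force $x_1(C_{a_0})>0$, whereupon point iii.\ of Lemma \ref{lem:preliminary_prop} gives $\dot\kappa_\gamma(a_0)>0$. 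Differentiating $\Hc_f=\kappa_\gamma+(n-2)\kappa(C_t)+H_1=\lambda$ and using the sign of $H_1'$ provided by Lemma \ref{lem:H1_circle} one concludes that $\kappa_\gamma-\kappa(C_t)$ remains positive just past $a_0$, contradicting maximality.
\item $\kappa(C_{a_0})=1$: this would send $x(C_{a_0})$ to $\partial_\infty H_{\R}^2\cap e_1$, but $x_1(C_t)$ is non-decreasing on $I_U$ by (c) and $\gamma$ stays inside a bounded isoperimetric region (Theorem \ref{thm:regularity}), precluding such degeneration to a horocycle at a finite $a_0$.
\end{itemize}
These three exclusions simultaneously yield $\dot\gamma(a_0)=X^\perp(\gamma(a_0))$, $\kappa(C_{a_0})>1$, and $\kappa_\gamma(a_0)\geq\kappa(C_{a_0})$, hence $a_0\in I_U$. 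Finally, the monotonicity of $x_1(C_t)$ together with Lemma \ref{lem:uniqueness} gives $x_1(C_{a_0})\geq x_1(C_0)>0$; if $\gamma_1(a_0)=0$, then $\gamma(a_0)\in e_2$ and $\dot\gamma(a_0)=X^\perp$ would force $x(C_{a_0})=o$, contradicting $x_1(C_{a_0})>0$.

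The step I expect to be hardest is the second bullet: Chambers' one-line Euclidean monotonicity of $\kappa(C_t)$ becomes a bookkeeping exercise here because both $\kappa_\gamma$ and $\kappa(C_t)$ enter $\Hc_f=\lambda$, and the positivity of $\kappa_\gamma-\kappa(C_t)$ past $a_0$ must be extracted via the precise sign of $H_1'$ supplied by Lemma \ref{lem:H1_circle}.
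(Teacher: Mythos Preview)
Your proposal follows essentially the same route as the paper --- Chambers' scheme, fed by Lemmas \ref{lem:first_dynamic}, \ref{lem:uniqueness}, \ref{lem:center_C}, and \ref{lem:preliminary_prop}. The paper orders things slightly differently (it first argues $a_0\in I_U$ from closedness of the defining conditions, then $\gamma_1(a_0)>0$, then $\dot\gamma(a_0)=X^\perp$ by showing that a strictly-II-quadrant endpoint would permit extension via point \ref{itm:_U_end}.\ of Lemma \ref{lem:preliminary_prop}), but the substance is the same. Your explicit exclusion of $\alpha(a_0)=\pi/2$ via $\dot\alpha=-\kappa_\gamma+K_1\cos\alpha<0$ (Lemma \ref{lem:formula_k}) is something the paper leaves implicit, so that addition is welcome.

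Two small points need patching. First, your argument for $\gamma_1(a_0)>0$ only treats the case $\gamma_1(a_0)=0$; the same geometry handles $\gamma_1(a_0)<0$ as well: when $\dot\gamma(a_0)=X^\perp$, the center $x(C_{a_0})$ is the foot on $e_1$ of the $X$-geodesic through $\gamma(a_0)$, so $\gamma_1(a_0)\leq 0$ forces $x_1(C_{a_0})\leq 0$, contradicting $x_1(C_{a_0})\geq x_1(C_0)>0$. Second, your third bullet does not actually exclude $\kappa(C_{a_0})=1$: monotonicity of $x_1(C_t)$ is perfectly compatible with $x_1(C_t)\nearrow 1\in\partial_\infty H^2_\R$, so ``boundedness of $\gamma$'' alone does not block the horocycle. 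The clean fix is to observe that for $\dot\gamma(a_0)$ in the closed II quadrant at a finite point $\gamma(a_0)$, the geodesic ray from $\gamma(a_0)$ in direction $-\nu$ meets $e_1$ at a finite point --- this is exactly the hyperbolic triangle computation inside the proof of Lemma \ref{lem:center_C} --- so $C_{a_0}$ has finite hyperbolic radius and $\kappa(C_{a_0})>1$. Finally, in your second bullet the detour through differentiating $\Hc_f$ and Lemma \ref{lem:H1_circle} is redundant: point \ref{itm:_U_end}.\ of Lemma \ref{lem:preliminary_prop} already delivers $\dot\kappa_\gamma(a_0)>0$, and the third-order contact $c_{a_0}=C_{a_0}$ gives $\frac{d}{dt}\kappa(C_t)\big|_{a_0}=0$, whence $\kappa_\gamma-\kappa(C_t)>0$ just past $a_0$ directly.
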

\begin{proof}Thanks to Lemma \ref{lem:preliminary_prop}, the proof goes exactly as \cite[Lemma 3.11 and Proposition 3.12]{chambers}. We sketch for completeness the idea behind each point. The upper curve is non empty because by Lemma \ref{lem:first_dynamic} and Lemma \ref{lem:preliminary_prop} points \ref{itm:center_C}. and \ref{itm:existence_I_U}. since $c_0=C_0$ approximates $\gamma$ up to the fourth order near zero, we have that there exists $\varepsilon>0$ such that in $[0,\varepsilon)$, $\dot\gamma$ belongs to the II quadrant, $\kappa_\gamma\geq\kappa(C_t)>1$ and $\frac{d}{dt}x_1(C_t)\geq 0$. Hence, $[0,\epsilon)\subset I_U$. Notice that $0<a_0$ cannot be equal to $a$ since otherwise the curve $\gamma$ does not close itself on $e_1$, simply because $\dot\gamma$ belongs to the II quadrant by definition of $I_U$. By the regularity of $\gamma$ and that $I_U$ is defined by three close conditions, we have that $a_0\in I_U$. By composing with the unique hyperbolic isometry sending $\gamma(a_0)$ on $e_2$ fixing $e_1$, we can see that $x_1(C_{a_0})\leq 0$ because $\dot\gamma(a_0)$ belongs to the II quadrant. Lemma \ref{lem:uniqueness} and Lemma \ref{lem:center_C} imply that $x_1(C_0)>0$ and since $\frac{d}{dt}x_1(C_t)\geq 0$ in $I_U$, one must have that $\gamma_1(a_0)>0$. The last point is proved by contradiction: if $\dot\gamma(a_0)\neq X^\perp(\gamma(a_0))$, then $a_0\in I_U$ implies that $\dot\gamma(a_0)$ is strictly in the II quadrant. If $\kappa_\gamma(a_0)=\kappa(C_{a_0})>1$, then $c_{a_0}=C_{a_0}$ approximates $\gamma$ to the third order and Lemma \ref{lem:preliminary_prop} point \ref{itm:_U_end}. implies that there exists some $\delta>0$ such that $\kappa_\gamma(t)\geq\kappa(C_t)>1$ for $t\in[a_0,a_0+\delta)$. The same holds if $\kappa_\gamma(a_0)>\kappa(C_{a_0})>1$ by continuity.  This means that $[a_0,a_0+\delta)\subset I_U$, which is not possible by the very definition of $a_0$. Hence, $\dot\gamma(a_0)=X^\perp(\gamma(a_0))$. 
\end{proof}
\begin{definition}[Lower curve]The lower curve  is the maximal connected interval $I_L\subset[a_0,a)$ such that for all $t\in I_L$
\begin{enumerate}[a.]
\item $a_0\in I_L$,
\item $\dot\gamma(t)$ is in the III  quadrant,
\item calling $\bar t\in I_U$ the unique time such that $S(\gamma(t))=S(\gamma(\bar t))$ we have that $\kappa_\gamma(t)\geq\kappa_\gamma(\bar t)$.
\end{enumerate}
We set
\[
a_1:=\sup I_L.
\]
\end{definition}
Notice that $a_0$ truly belongs to $I_L$, so $I_L\neq\{\}$. Also, the lower curve is graphical with respect to the hypercyclical foliation because $\dot\gamma$ is in the III quadrant. Our next goal is to prove that $a_1<a$. Again, we proceed by contradiction, and the intuition is the following: suppose that $a_1=a$. If $\kappa_\gamma(t)=\kappa_\gamma(\bar t)$  for all $t\in I_L$, then the lower curve is nothing else than the upper curve reflected with respect to the geodesic orthogonal to $e_1$ and passing through $\gamma(a_0)$. Hence, $\lim_{t\to a^+}\alpha(t)=-\frac\pi2$. Otherwise, if the $\gamma\lvert_{I_L}$ curves strictly faster than the upper curve at some point, then the angle of incidence $\lim_{t\to a^+}\alpha(t)<-\frac\pi 2$ (see Figure \ref{fig:curvature}). But this cannot be, because it contradicts the regularity of $\partial E$ points pointed out in Theorem \ref{thm:regularity}. To prove that the lower curve curves strictly faster than the upper curve we need first to express the curvature with respect to the $\{X,X^\perp\}$ frame, and next prove three comparison lemmas.
 \begin{lemma}\label{lem:formula_k}Let $\eta$ any regular curve parametrized by arclength. Then,
\[
-\kappa_\eta(t)=\dot\beta(t)-K_1(\eta(t))\cos(\beta(t)),
\]
where $\beta(t)$ denotes the angle between $\dot\eta$ and $X^\perp$, and $K_1$ is the curvature of the leaf $\delta_l$ passing through $\eta(t)$.
\end{lemma}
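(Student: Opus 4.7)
The plan is to expand $\dot\eta$ in the orthonormal frame $\{X,X^\perp\}$, compute $\nabla_{\dot\eta}\dot\eta$ using the Levi-Civita connection on $(H_\R^2,g_H)$, and then read off $\kappa_\eta$ from the identity $\nabla_{\dot\eta}\dot\eta=\kappa_\eta\dot\eta^\perp$. With the convention that $\beta$ is the clockwise angle from $X^\perp$ to $\dot\eta$ (so that $\beta=0$ gives $\dot\eta=X^\perp$ and $\beta=\pi/2$ gives $\dot\eta=X$, consistent with the quadrant convention for $\alpha$), I would write
\[
\dot\eta=\cos(\beta)\,X^\perp+\sin(\beta)\,X.
\]

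The key preliminary step is to compute the four covariant derivatives of the frame. The integral curves of $X$ are geodesic rays hitting $e_1$ orthogonally, so $\nabla_X X=0$; together with metric compatibility and orthonormality this forces $\nabla_X X^\perp=0$. The integral curves of $X^\perp$ are precisely the hypercycles $\delta_l$, parametrized by arc length with $\dot\delta_l=X^\perp$ and inward signed curvature $K_1$. Since $\dot\delta_l^\perp=(X^\perp)^\perp=-X$ (counterclockwise rotation by $\pi/2$ sends $X^\perp$ to $-X$), the defining relation $\nabla_{\dot\delta_l}\dot\delta_l=\kappa_{\delta_l}\dot\delta_l^\perp$ gives $\nabla_{X^\perp}X^\perp=-K_1 X$; and then orthonormality yields $\nabla_{X^\perp}X=K_1 X^\perp$.

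Plugging these into the expansion
\[
\nabla_{\dot\eta}\dot\eta=-\sin(\beta)\dot\beta\,X^\perp+\cos(\beta)\,\nabla_{\dot\eta}X^\perp+\cos(\beta)\dot\beta\,X+\sin(\beta)\,\nabla_{\dot\eta}X,
\]
and using $\nabla_{\dot\eta}X^\perp=-K_1\cos(\beta)X$, $\nabla_{\dot\eta}X=K_1\cos(\beta)X^\perp$, one gets
\[
\nabla_{\dot\eta}\dot\eta=\dot\beta\bigl(\cos(\beta)X-\sin(\beta)X^\perp\bigr)-K_1\cos(\beta)\bigl(\cos(\beta)X-\sin(\beta)X^\perp\bigr).
\]
Since the counterclockwise rotation sends $X^\perp\mapsto -X$ and $X\mapsto X^\perp$, one computes $\dot\eta^\perp=\sin(\beta)X^\perp-\cos(\beta)X$, i.e.\ $-\dot\eta^\perp=\cos(\beta)X-\sin(\beta)X^\perp$. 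Therefore
\[
\nabla_{\dot\eta}\dot\eta=\bigl(-\dot\beta+K_1\cos(\beta)\bigr)\dot\eta^\perp,
\]
which gives $\kappa_\eta=-\dot\beta+K_1\cos(\beta)$, equivalently $-\kappa_\eta=\dot\beta-K_1\cos(\beta)$.

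I do not expect a real obstacle here: the whole content is the computation of $\nabla_{X^\perp}X^\perp$, and the rest is linear algebra in the moving frame. The only delicate points are keeping the orientation conventions straight (clockwise vs.\ counterclockwise for $\beta$, and the sign of $K_1>0$ for the inward signed curvature of the hypercycle), which I would sanity-check against the special case $\eta=\delta_l$: there $\beta\equiv 0$, $\dot\beta\equiv 0$, and the formula correctly returns $\kappa_\eta=K_1$.
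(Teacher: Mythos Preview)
Your proof is correct and follows essentially the same approach as the paper: both compute $\nabla_{\dot\eta}\dot\eta$ in the moving orthonormal frame $\{X,X^\perp\}$ using $\nabla_XX=0$ and $g_H(\nabla_{X^\perp}X^\perp,X)=-K_1$. The only cosmetic difference is that you expand all four frame derivatives and read off $\kappa_\eta$ directly, while the paper projects onto $X$ via the identity $g_H(\nabla_{\dot\eta}\dot\eta,X)=\partial_t(\sin\beta)-g_H(\dot\eta,\nabla_{\dot\eta}X)$ and divides out a factor of $\cos\beta$ at the end.
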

\begin{proof}Decompose $\dot\eta=AX+BX^\perp$. Then, since $\kappa_\eta\dot\eta^\perp=\nabla_{\dot\eta}\dot\eta$, we get that
\[
-\cos(\beta)\kappa_\eta=g_H\Bigl(\nabla_{\dot\eta}\dot\eta,X\Bigr)=\partial_t\bigl(\sin(\beta)\bigr)-g_H\Bigl(\dot\eta,\nabla_{\dot\beta}X\Bigr).
\]
Now, keeping in mind that $\nabla_XX=0$ and $g_{\Ham}\bigl(\nabla_{X^\perp}X^\perp,X\bigr)=-K_1(\eta(t))$, we get
\[
g_H\Bigl(AX+BX^{\perp},\nabla_{AX+BX^\perp}X\Bigr)=B^2g_H\Bigl(X^{\perp},\nabla_{X^\perp}X\Bigr)=\cos(\beta)^2K_1(\eta(t)).
\]
Dividing both sides by $\cos(\beta)$ we get the desired identity.
\end{proof}
We can prove our first curvature comparison lemma.
\begin{lemma}[$\kappa$ comparison lemma]\label{lem:k_comparison}Let $\eta_1:(0,A_1]\to H_{\R}^2$ and $\eta_2:(0,A_2]\to H_{\R}^2$ be two hypercyclical graphical curves parametrized by arclength and with velocity vectors in the II quadrant. Suppose that there exists $l_0\in[0,1)$ such that 
\[
\lim_{t\to 0^+}\eta_1(t)\text{ and }\lim_{t\to 0^+}\eta_2(t),
\]
exist and belong to the same leaf $\delta_{l_0}$. Also, suppose that  $\eta_1(A_1)=\eta_2(A_2)$, $\dot\eta_1(A_1)=\dot\eta_2(A_2)$, and that if $S(\eta_1(t))=S(\eta_2(\tau))$ then $\kappa_{\eta_1}(t)\geq\kappa_{\eta_2}(\tau)$. Then, calling $\alpha_1$ and $\alpha_2$ the angle made by $\dot\eta_1$ and $\dot\eta_2$ with $X^\perp$ we have that
\[
\lim_{t\to 0^+}\alpha_1(t)\geq \lim_{t\to 0^+}\alpha_2(t).
\]
Moreovoer, if for some $t$ and $\tau$ such that $S(\eta_1(t))=S(\eta_2(\tau))$, one has that $\kappa_1(t)>\kappa_2(\tau)$, then
\[
\lim_{t\to 0^+}\alpha_1(t)>\lim_{t\to 0^+}\alpha_2(t).
\]
\end{lemma}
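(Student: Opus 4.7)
The plan is to reduce the comparison of initial angles to a one-dimensional linear comparison principle, obtained by reparametrizing both curves by the common coordinate $s := S(\eta_i(\cdot))$, i.e., the hyperbolic distance from $e_1$. Since each $\eta_i$ is graphical with respect to $(\delta_l)_{l\in[0,1)}$ and has velocity in the II quadrant, $s$ is weakly increasing along $\eta_i$, and strictly increasing away from the isolated points where $\alpha_i=0$. On the domain where $\sin(\alpha_i)>0$ one has $dt/ds=1/\sin(\alpha_i)$, and both curves are defined in this parametrization on the same interval $[s_0,s_1]$, with $s_0:=2\arctanh(l_0)$ and $s_1:=S(\eta_1(A_1))=S(\eta_2(A_2))$.

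The crucial step is to recast Lemma \ref{lem:formula_k} as a first-order linear ODE in $s$. Starting from $-\kappa_\eta=\dot\alpha-K_1\cos\alpha$ in arclength and using $\dot\alpha=\sin(\alpha)\,d\alpha/ds$, a direct manipulation yields
\[
\frac{d}{ds}\cos(\alpha(s))+K_1(s)\cos(\alpha(s))=\kappa_\eta(s),
\]
where $K_1(s)$ is the curvature of the leaf $\delta_l$ with $S=s$ and therefore depends only on $s$, not on the individual curve. Setting $u_i(s):=\cos(\alpha_i(s))$, the two functions $u_1,u_2$ satisfy linear ODEs with the \emph{same} coefficient $K_1(s)$ but possibly different source terms $\kappa_{\eta_i}(s)$.

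Introducing the integrating factor $M(s):=\exp\!\bigl(\int_{s_0}^{s}K_1(\sigma)\,d\sigma\bigr)$ we have $(Mu_i)'=M\kappa_{\eta_i}$. Integrating from $s$ up to $s_1$ and using the common final tangent $u_1(s_1)=u_2(s_1)$, subtraction gives
\[
M(s)\bigl(u_2(s)-u_1(s)\bigr)=\int_{s}^{s_1}M(\sigma)\bigl[\kappa_{\eta_1}(\sigma)-\kappa_{\eta_2}(\sigma)\bigr]\,d\sigma\;\geq\;0
\]
by the hypothesis $\kappa_{\eta_1}\geq\kappa_{\eta_2}$ at matching $s$-values. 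Because $\alpha_i\in[0,\pi/2]$ in the II quadrant and $\cos$ is strictly decreasing there, the inequality $u_2\geq u_1$ translates into $\alpha_1\geq\alpha_2$ for every $s\in(s_0,s_1)$; letting $s\to s_0^+$ delivers the desired comparison of the initial angles. The strict version is immediate: if $\kappa_{\eta_1}(t)>\kappa_{\eta_2}(\tau)$ at some matched pair, continuity makes the integrand strictly positive on a non-empty open set, hence the integral is strictly positive all the way down to $s_0$, yielding the strict inequality in the limit.

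The main technical point to verify is the legitimacy of the reparametrization by $s$ near the possibly isolated points where $\sin(\alpha_i)=0$ and at the endpoints, where the curves may approach in a degenerate way. Graphicality rules out $S$ being constant on any subinterval, so such points are isolated, the linear ODE is valid on the complement, and the integrated identities extend to the closed interval by continuity of $u_i$ and $\kappa_{\eta_i}$.
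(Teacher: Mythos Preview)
Your proof is correct and follows essentially the same route as the paper: reparametrize both curves by the common height $s=S$, turn Lemma~\ref{lem:formula_k} into the first–order linear ODE $\frac{d}{ds}\cos\alpha + K_1\cos\alpha = \kappa$, and conclude via the integrating factor together with the common tangent at the top. The only cosmetic difference is that the paper writes the integrating factor explicitly as $(1+l^2)$ in the Euclidean leaf parameter and integrates the total derivative over the whole interval, whereas you keep the factor abstract as $M(s)=\exp\!\bigl(\int K_1\bigr)$ and integrate from a generic $s$ to $s_1$; you are also a bit more explicit than the paper about the isolated points where $\sin\alpha_i=0$.
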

\begin{proof}Since the curves are graphical with respect to the hypercyclical foliation we can operate a change of variable: we observe that the two height functions $s_1(t):=S(\eta_1(t))$ and $s_2(\tau)=S(\eta_2(\tau))$ are bijections with same image of the form $(l_0,L]$. By hypothesis $\kappa_{\eta_1}(s_1^{-1}(l))\geq\kappa_{\eta_2}(s_2^{-1}(l))$ for every $l\in(l_0,L]$. Comparing the two curves in the $l\in(l_0,L]$ variable, since $s_i'=g_H(\nabla S,\dot\eta_i)=g_H(X,\dot\eta_i)=\sin(\alpha_i)$, $i=1,2$, we get by Lemma \ref{lem:formula_k} that 
\[
0\leq\kappa_{\eta_1}(l)-\kappa_{\eta_2}(l)=\dot\alpha_2(l)\sin(\alpha_2(l))-\dot\alpha_1(l)\sin(\alpha_1(l))-\frac{2l}{1+l^2}\bigl(\cos(\alpha_2(l))-\cos(\alpha_1(l))\bigr).
\]
Multiplying by $(1+l^2)$ and integrating we finally get that
\begin{align*}
0&\leq\int_{l_0}^L(1+l^2)(\cos(\alpha_1)
\cos(\alpha_2))'+2l(\cos(\alpha_1)-\cos(\alpha_2))\,dl\\
&=\int_{l_0}^L\frac{d}{dl}\Bigl((1+l^2)(\cos(\alpha_1)-\cos(\alpha_2))\Bigr)\,dl=\lim_{l\to l_0^+}(1+l^2)(\cos(\alpha_2(l))-\cos(\alpha_1(l))).
\end{align*}
If the two curvatures are different somewhere, then the inequality between the two angles is strict.
\end{proof}
\begin{figure}[htbp]
\centering
\includegraphics[scale=0.7]{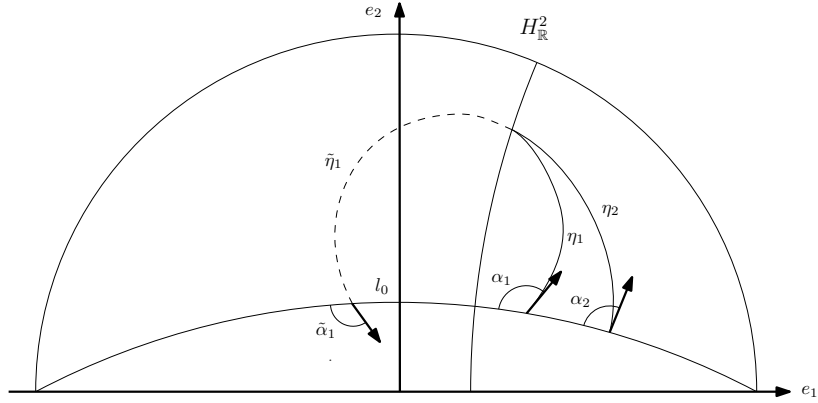}
\caption{The curvature comparison.}
\label{fig:curvature}
\end{figure}
\begin{lemma}[$\kappa(C_t)$ comparison lemma]\label{lem:C_comparison}Let $\eta_1$, $\eta_2$ as in Lemma \ref{lem:k_comparison}. Then, for any two points $\eta_1(t_1)$ and $\eta_2(t_2)$ on the same leaf $\delta_l$, calling $C^1$ and $C^2$ the comparison circles at $\eta_1(t_1)$ and $\eta_2(t_2)$ as in Definition \ref{def:comparison_c}, we have that
\[
\kappa(C^1)\leq\kappa(C^2).
\]
\end{lemma}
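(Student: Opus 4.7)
\emph{Proof plan.} The approach is to reduce the claim to a comparison of angles, via an explicit closed form for $\kappa(C_t)$ in terms of the angle $\beta$ that $\dot\eta$ makes with $X^\perp$ and the height $s(t):=S(\eta(t))$. First I would derive such a formula. By tangency, the hyperbolic center $x(C_t)$ lies on the geodesic through $\eta(t)$ in the direction $\dot\eta^\perp$; together with the foot $F$ of the perpendicular from $\eta(t)$ to $e_1$ and the segment $[F,\eta(t)]$, these points form a hyperbolic right triangle with right angle at $F$, adjacent leg of length $s(t)$, and hypotenuse of length $\tau(t):=d_H(\eta(t),x(C_t))$. Expanding $\dot\eta^\perp=-\cos\beta\,X+\sin\beta\,X^\perp$ in the special frame and comparing with the downward unit vector $-X$ at $\eta(t)$ shows that the interior angle of the triangle at $\eta(t)$ equals exactly $\beta(t)$. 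Standard hyperbolic right-triangle trigonometry then yields
\[
\cos\beta(t)=\frac{\tanh s(t)}{\tanh\tau(t)},\qquad\text{hence}\qquad \kappa(C_t)=\coth\tau(t)=\frac{\cos\beta(t)}{\tanh s(t)}.
\]

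Next I would apply Lemma~\ref{lem:k_comparison} to the restricted curves $\eta_1\lvert_{[t_1,A_1]}$ and $\eta_2\lvert_{[t_2,A_2]}$. These subcurves inherit every hypothesis of that lemma: they are graphical with respect to the foliation $(\delta_l)_{l\in[0,1)}$, their velocities remain in the II quadrant, they share the terminal endpoint $\eta_1(A_1)=\eta_2(A_2)$ with a common tangent, the curvature inequality $\kappa_{\eta_1}\geq\kappa_{\eta_2}$ at equal heights is preserved on the sub-intervals, and by assumption the starting points $\eta_1(t_1),\eta_2(t_2)$ lie on the common leaf $\delta_l$. Lemma~\ref{lem:k_comparison} therefore delivers $\beta_1(t_1)\geq\beta_2(t_2)$ (the angles $\alpha_i$ and $\beta_i$ denote the same quantity). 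Since both angles lie in $[0,\pi/2]$ and $\cos$ is non-increasing there, we obtain $\cos\beta_1(t_1)\leq\cos\beta_2(t_2)$; dividing by the common positive value $\tanh s(t_1)=\tanh s(t_2)$ and invoking the formula of the first step gives the desired inequality $\kappa(C^1)\leq\kappa(C^2)$.

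The most delicate point is the first step: one has to track the orientation of $\dot\eta^\perp$ in the $\{X,X^\perp\}$-frame carefully, to confirm that $x(C_t)$ indeed lies on $e_1$ (so that the right triangle is genuinely constructed within $(H_{\R}^2)_+$) and that its interior angle at $\eta(t)$ equals $\beta(t)$ rather than a supplement or complement. The parameter range where $C_t$ is a true hyperbolic circle corresponds precisely to $\cos\beta(t)>\tanh s(t)$, and the formula extends by continuity to the boundary horocycle case $\cos\beta=\tanh s$ ($\kappa=1$); any further degeneracy is handled by the same expression read as the hyperbolic curvature of the Euclidean circle with Euclidean center on $e_1$, via Remark~\ref{rmk:conformal}. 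Once the closed form $\kappa(C_t)=\cos\beta(t)/\tanh s(t)$ is in hand, the lemma follows as an immediate corollary of Lemma~\ref{lem:k_comparison} applied to the appropriately restricted curves.
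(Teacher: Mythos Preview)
Your proposal is correct and follows essentially the same route as the paper: both arguments construct the hyperbolic right triangle with vertices $\eta_i(t_i)$, its foot on $e_1$, and $x(C^i)$, extract the identity $\kappa(C^i)=\coth(\text{radius})=\cos\alpha_i/\tanh(\text{height})$ from hyperbolic right-triangle trigonometry, and then conclude by the angle inequality of Lemma~\ref{lem:k_comparison}. Your explicit restriction of the curves to $[t_i,A_i]$ and your discussion of the horocycle/degenerate cases are welcome clarifications, but the underlying argument is identical to the paper's.
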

\begin{proof}For $i=1,2$, the hyperbolic radius of $C^i$ together with $e_1$ and the geodesic starting from $\eta_i(t_i)$ and hitting $e_1$ perpendicularly bound a geodesic triangle $\triangle_i$. Let $d_1^i$ be the hyperbolic radius, $d_2^i$ be the side touching $e_1$ and $d_3^i$ the the remaining side of $\triangle_i$. Similarly,  for $i=1,2$ and $j=1,2,3$, call $\beta_j^i$ the angle opposite to $d_j^i$, and $\ell_i^j$ the length of $d^i_j$. We refer to Figure \ref{fig:curvatureC}. By construction $\beta_1^1=\beta_1^2=\frac{\pi}{2}$, $\beta_2^i=\alpha_i$, and since $\eta_1(t_1)$ and $\eta_2(t_2)$ are in the same hypercycle by hypothesis, we get $\ell_3^1=\ell_3^2$. Then, by the hyperbolic law of cosines and by Lemma \ref{lem:k_comparison} we get
\[
\kappa(C^1)=\coth(\ell_1^1)=\frac{\cos(\alpha_1)}{\tanh(\ell_3^1)}\leq\frac{\cos(\alpha_2)}{\tanh(\ell_3^2)}=\coth(\ell_1^2)=\kappa(C^2).
\]
\end{proof}
\begin{figure}[htbp]
\centering
\includegraphics[scale=0.7]{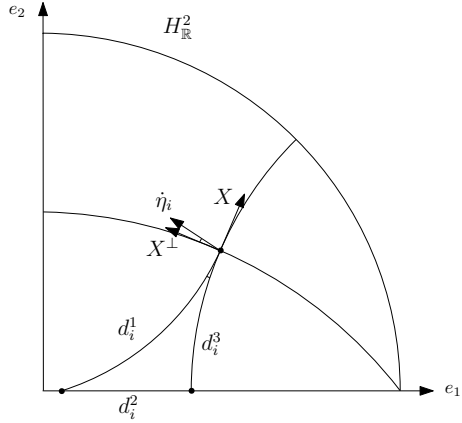}
\caption{The curvature comparison for $C_t$.}
\label{fig:curvatureC}
\end{figure}
\begin{lemma}[$H_1$ comparison lemma]\label{lem:H_comparison}Let $\eta_1$, $\eta_2$ as in Lemma \ref{lem:k_comparison} and let $\tilde\eta_1$ be the reflection of $\eta_1$ with respect to the geodesic passing through $\eta_1(A_1)$ and crossing $e_1$ perpendicularly. Reverse its parametrization, so that the angle that $\dot{\tilde\eta}_1$ makes with $X^\perp$ is equal to $\tilde\alpha_1=-\alpha_1$. Moreover, suppose that 
\[
g_H(N,\dot\eta_2)\leq 0.
\]
Denote the unitary outward pointing normals to $\eta_1$ and $\eta_2$ by $\tilde\nu_1$ and $\nu_2$. Then, for any two points $\tilde\eta_1(t_1)$ and $\eta_2(t_2)$ on the same leaf $\delta_l$ we have that
\[
g_H(N(\tilde\eta_1(t_1)),\tilde\nu_1(t_1))\leq g_H(N(\eta_1(t_2)),\nu_2(t_2)),
\]
with equality if and only if $\dot\eta_2(t_2)$ and $\dot{\tilde\eta}_1(t_1)$ are tangent to the same circle centered in the origin and $\dot{\tilde\eta}_1(t_1)=-\dot{\eta}_2(t_2)$.
\end{lemma}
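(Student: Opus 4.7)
The plan is to exploit the reflection isometry $R$ across the integral curve of $X$ through $\eta_1(A_1)$ to rewrite the left-hand side in terms of $\eta_1$ itself and a shifted base point $O':=R(0)\in e_1$ for the radial field. Since $R$ is an isometry preserving $e_1$ and the hypercyclical foliation, one verifies $\tilde\nu_1=R_*\nu_1$, and for $p_1:=\eta_1(A_1-t_1)$ (which lies on the same leaf $\delta_l$ as $\tilde\eta_1(t_1)$),
\[
g_H(N(\tilde\eta_1(t_1)),\tilde\nu_1(t_1))=g_H(N^{O'}(p_1),\nu_1(p_1)),
\]
where $N^{O'}$ denotes the unit radial field emanating from $O'$. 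The claim becomes a pointwise comparison on $\delta_l$ between $g_H(N^{O'},\nu_1)$ at $p_1$ and $g_H(N,\nu_2)$ at $p_2:=\eta_2(t_2)$.

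Next, I would derive a pointwise trigonometric identity. For $x\in(H_\R^2)_+$ with perpendicular foot $F(x)\in e_1$ and any $P\in e_1$, the right hyperbolic triangle $\triangle P F(x)x$ (right angle at $F(x)$) handled via the same sine/cosine laws as in Lemmas \ref{lem:center_C} and \ref{lem:C_comparison} yields
\[
N^P(x)=\cos\phi_P(x)\,X-\sin\phi_P(x)\,X^\perp,\qquad \tan\phi_P(x)=\frac{\tanh\sigma_P(x)}{\sinh S(x)},
\]
where $\sigma_P(x)$ is the signed hyperbolic distance from $P$ to $F(x)$ along $e_1$. Since $\nu=\cos\alpha\,X-\sin\alpha\,X^\perp$ for $\dot\eta$ in the II quadrant, we obtain the compact identity $g_H(N^P,\nu)=\cos(\phi_P-\alpha)$, and the inequality reduces to the purely scalar trigonometric statement
\[
\cos(\phi_{O'}(p_1)-\alpha_1(l))\leq\cos(\phi_0(p_2)-\alpha_2(l)).
\]

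Parametrizing both curves by $l\in(l_0,L]$ (possible since both are hypercycle-graphical), the restrictions $\eta_i|_{[l,L]}$ satisfy the hypotheses of Lemma \ref{lem:k_comparison} with starting leaf $\delta_l$, yielding the pointwise angular comparison $\alpha_1(l)\geq\alpha_2(l)$ at every leaf. The extra hypothesis $g_H(N,\dot\eta_2)\leq 0$ reads, via the decomposition above, as $\phi_0(p_2)\geq\alpha_2(l)$, placing $A:=\phi_0(p_2)-\alpha_2(l)$ in $[0,\pi/2]$. The delicate remaining step is to establish that $|B|\geq A$ where $B:=\phi_{O'}(p_1)-\alpha_1(l)$: invoking the reflection identity $\sigma_{O'}(p_1)=\sigma_0(p_1)-2\sigma_0(\eta_1(A_1))$ together with an integration of $\kappa_{\eta_1}\geq\kappa_{\eta_2}$ along $[l,L]$, mimicking the total-derivative manipulation in the proof of Lemma \ref{lem:k_comparison} but now for $\tan\phi$, one obtains the needed bound. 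The conclusion follows from monotonicity of $\cos$ on $[0,\pi]$, and the equality case arises from saturation of both the angular comparison (forcing $\alpha_1\equiv\alpha_2$ and hence $\eta_1\equiv\eta_2$ by the equality clause of Lemma \ref{lem:k_comparison}) and of the trigonometric bound, producing the stated tangency/antiparallel condition.

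The main obstacle I expect is precisely this last step: bounding $\phi_{O'}(p_1)-\alpha_1(l)$ in the absence of any spherical-monotonicity assumption on $\eta_1$. A case split according to the sign of $\sigma_{O'}(p_1)$ (equivalently, whether the foot of $p_1$ lies to the right or left of $O'$) will likely be needed, and each case should combine the trigonometric identity above with the hyperbolic law-of-cosines argument already used in Lemma \ref{lem:C_comparison}. Careful sign bookkeeping throughout is the most error-prone part of the argument.
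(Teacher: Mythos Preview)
Your setup is sound and close in spirit to the paper's: both reduce the inequality to a comparison of two angles of the form ``angle between $N$ and the outward normal''. Your reflection trick (pushing $\tilde\eta_1$ back to $\eta_1$ at the cost of moving the center to $O'=R(0)$) is a legitimate reformulation, and the identity $g_H(N^P,\nu)=\cos(\phi_P-\alpha)$ together with the observation that the hypothesis $g_H(N,\dot\eta_2)\le 0$ puts $A:=\phi_0(p_2)-\alpha_2(l)$ in $[0,\pi/2]$ are both correct.

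The genuine gap is exactly where you flag it: you never establish $|B|\ge A$. The mechanism you propose---``mimicking the total-derivative manipulation in the proof of Lemma~\ref{lem:k_comparison} but now for $\tan\phi$''---does not obviously go through. In Lemma~\ref{lem:k_comparison} the curvature inequality yields a differential inequality for $\alpha_i$ along the foliation; there is no analogous differential inequality for $\phi_P$ coming from the curvature hypothesis, because $\phi_P$ depends only on the \emph{position} on $\delta_l$ and on the choice of center $P$, not on the curve. So integrating $\kappa_{\eta_1}\ge\kappa_{\eta_2}$ cannot by itself control $\phi_{O'}(p_1)$ versus $\phi_0(p_2)$; it only re-proves $\alpha_1\ge\alpha_2$, which you already have. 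Your case split on the sign of $\sigma_{O'}(p_1)$ and your reflection identity are fine bookkeeping devices, but they do not supply the missing geometric input.

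What the paper does instead is cleaner and avoids the shifted center altogether. It stays with the original radial field $N$, parametrizes the leaf $\delta_l$ by arclength $s$ with $s=0$ on $e_2$, and lets $\vartheta(s)$ be the angle $N$ makes with $X^\perp$. The decisive observation is the symmetry $\vartheta(-s)=\pi-\vartheta(s)$, which encodes the reflection about $e_2$. One locates a point $s_2^\star$ on $\delta_l$ where $\vartheta(s_2^\star)=\alpha_2$ (this is exactly where a curve with tangent angle $\alpha_2$ would be tangent to a centered circle), sets $s_1^\star=-s_2^\star$, and expresses both $\Theta_1=\vartheta(s_1)-\tilde\alpha_1-\tfrac{\pi}{2}$ and $\Theta_2=\vartheta(s_2)-\alpha_2-\tfrac{\pi}{2}$ as integrals of $\dot\vartheta$ over suitable subintervals. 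Then $\Theta_2-\Theta_1=(\tilde\alpha_1+\alpha_2)+\int_{s_2^\star}^{s_2}\dot\vartheta+\int_{s_1}^{s_1^\star}\dot\vartheta\le 0$ follows from $\dot\vartheta<0$ together with $\tilde\alpha_1+\alpha_2=-\alpha_1+\alpha_2\le 0$ (Lemma~\ref{lem:k_comparison}). No second curvature integration is needed; the only analytic input beyond Lemma~\ref{lem:k_comparison} is the monotonicity of $\vartheta$ along $\delta_l$. If you want to salvage your route, the analogue you need is precisely this monotonicity together with the $e_2$-symmetry, translated through $R$---but at that point you have essentially reproduced the paper's argument in reflected coordinates.
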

\begin{proof}
Let $\vartheta(s)$ be the angle that $N$ makes with $X^\perp$ at $\delta_l(s)$ and $s_2<s_1$ be such that $\delta_l(s_1)=\tilde\eta_1(t_1)$ and $\delta_l(s_2)=\eta_1(t_2)$. Setting $\Theta_1:=\vartheta(s_1)-\tilde\alpha_1-\frac\pi 2$ and $\Theta_2:=\vartheta(s_2)-\alpha_2-\frac\pi 2$, we get that
\[
g_H(N(\tilde\eta_1(t_1)),\tilde\nu_1(t_1))=\cos(\Theta_1),
\]
and
\[
g_H(N(\eta_2(t_2)),\nu_2(t_2))=\cos(\Theta_2).
\]
Let $s_2^*$ such that the unit vector at $\delta_l(s_2^*)$ that forms an angle of $\alpha_2$ with $X^\perp$ is tangent to a circle centered in the origin. The value $s_2^*$ must exist and it is greater than $s_1$ because by Lemma \ref{lem:center_C}, Equation \eqref{eq:ANGLES}, we have that $\vartheta(s_2)\geq\alpha_2$. Now, on the intersection of $\delta_l$ with $e_1$ we have $\vartheta=0$, hence by continuity there must be a point $s^*_2$ between this intersection and $\delta_l(s_1)$ such that $\vartheta(s_2^*)=\alpha$.  Then 
\[
\Theta_2=\vartheta(s_2)-\vartheta(s_2^*)=\int_{s_2^*}^{s_2}\dot\vartheta(s)\,ds.
\]
Let $s_1^*=-s_2^*$, and notice that $\vartheta(-s)=\pi-\vartheta(s)$ for every $s\geq0$. Then,
\begin{align*}
\Theta_1&=\vartheta(s_1)-\tilde\alpha_1-\frac\pi 2=\vartheta(s_2)+\alpha_2-(\tilde\alpha_1+\alpha_2)-\frac\pi 2\\
&=\vartheta(s_1)+\vartheta(s_2^*)-\pi-(\tilde\alpha_1+\alpha_2)\\
&=\vartheta(s_1)-\vartheta(-s_2^*)-(\tilde\alpha_1+\alpha_2)\\
&=-\int_{s_1}^{s^*_1}\dot\vartheta(s)\,ds-(\tilde\alpha_1+\alpha_2).
\end{align*}
Hence,
\[
\Theta_2-\Theta_1=(\tilde\alpha_1+\alpha_2)+\int_{s_2^*}^{s_2}\dot\vartheta(s)\,ds+\int_{s_1}^{s^*_1}\dot\vartheta(s)\,ds\leq 0,
\]
since $\dot\vartheta(s)<0$ and $\tilde\alpha_1+\alpha_2\leq 0$ by Lemma \ref{lem:k_comparison}. The equality is attained only when $s_2^*=s_2$, $s_1^*=s_1$ and $\alpha_2=\alpha_1$ as predicted.
\end{proof}
We can prove the main result about the lower curve.
\begin{proposition}\label{prop:down}It $\gamma$ is not a circle centered in the origin, the lower curve is contained in $[a_0,a)$, that is $0<a_1<a$. Furthermore, $a_1\in I_L$ and $\dot\gamma(a_1)=-X(\gamma(a_1))$.
\begin{proof}By property \ref{itm:I_L_big}. of Lemma \ref{lem:preliminary_prop}, $a_1>a>0$. Suppose by contradiction that $a_1=a$. Set $\tilde\eta_1$ to be the (reparametrized) lower curve and $\eta_2$ the upper curve. Choose any point $t\in I_L$ with corresponding $\bar t\in I_U$. Applying Lemma \ref{lem:C_comparison} and Lemma \ref{lem:H_comparison} to $\tilde\eta_1(t)$ and $\eta_2(\bar t)$, and taking advantage of the expression for $\Hc_f$ given in Lemma \ref{lem:const}, we get that
\begin{align*}
\Hc_f(\gamma(\bar t))&=\Hc_f(\gamma(t))=\kappa_{\gamma}(t)+(n-2)\kappa(C_t)+h'(d_{H}(0,\gamma(t)))g_H(\nu(t),N)\\
&<\kappa_{\gamma}(t)+(n-2)\kappa(C_{\bar t})+h'(d_{H}(0,\gamma(t)))g_H(\nu(t),N).
\end{align*}
We have that
\[
d_{H}(0,\gamma(t))\leq d_{H}(0,\gamma(\bar t)).
\]
This can be verified again via the trigonometric rules for hyperbolic triangles: fix $t\in I_U$, and call $\beta$ and $\bar\beta$ the angle that $N$ makes with $X(\gamma(t))$ and $X(\gamma(\bar t))$ respectively. Notice that $0\leq \beta\leq\bar\beta$. Then, calling $d$ the distance of $\gamma(t)$ and $\gamma(\bar t)$ from $e_1$, we get that
\[
\tanh(d_H(0,\gamma(t)))=\frac{\tanh(d)}{\cos(\beta)}\leq \frac{\tanh(d)}{\cos(\bar\beta)}=\tanh(d_H(0,\gamma(\bar t))).
\]
Hence
\[
\Hc_f(\gamma(\bar t))< \kappa_\gamma(t)+(n-2)\kappa(C_{\bar t})+h'(d_H(0,\gamma(\bar t)))g_H(\nu(\bar t),N),
\]
implying
\[
\kappa_{\gamma}(\bar t)<\kappa_{\gamma}(t),
\]
since $h$ is strictly convex. Again, Lemma \ref{lem:k_comparison} tells us that the lower curve hits the $e_1$ axis with an angle strictly smaller than $-\frac\pi2$. Contradiction. Therefore, $a_1<a$. Since $\gamma$ is smooth in $a_1<a$, and the conditions on $I_L$ are closed, we deduce that $a_1\in I_L$. Suppose now that $\alpha(a_1)$ is strictly in the III quadrant. Since $a_1\in I_L$, we can apply again the comparison lemmas to $\gamma(a_1)$ and $\gamma(\bar a_1)$ to infer
\[
\kappa_\gamma(\bar a_1)<\kappa_\gamma(a_1).
\]
By continuity of $\kappa_\gamma$ and $\dot \gamma$ around $a_1$, we get that there exists a neighbourhood of $a_1$ in which $\dot\gamma$ is in the III quadrant and the above inequality holds in the not strict sense. But this implies that $a_1$ is not the supremum of $I_L$. Therefore, the velocity vector of $\gamma$ at $a_1$ has to be equal to $-X$.
\end{proof}
\end{proposition}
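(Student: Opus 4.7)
The plan is to execute the strategy outlined just before the proposition: assume for contradiction that $a_1 = a$, then the three comparison lemmas (Lemmas \ref{lem:k_comparison}, \ref{lem:C_comparison}, \ref{lem:H_comparison}), together with the conservation law $\Hc_f \equiv \lambda$ from Lemma \ref{lem:const} and the strict log-convexity of $h$, force the lower curve to curve strictly faster than the upper curve at some point. By the strict case of Lemma \ref{lem:k_comparison} this produces an incidence angle of $\gamma$ on $e_1$ strictly less than $-\pi/2$; reflection symmetry of $\partial E$ across $e_1$ then creates a cusp at $\gamma(a)$, contradicting the regularity bullet of Theorem \ref{thm:regularity}.

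First I would verify $a_0 < a_1$. Proposition \ref{prop:up} yields $\dot\gamma(a_0) = X^\perp(\gamma(a_0))$, $\gamma_1(a_0) > 0$ and $\kappa_\gamma(a_0) \geq \kappa(C_{a_0}) > 1$, so Lemma \ref{lem:preliminary_prop} point \ref{itm:I_L_big} gives $\dot\kappa_\gamma(a_0) > 0$. Combined with $\dot\gamma$ tipping into the III quadrant just past $a_0$ by continuity, the three defining conditions of $I_L$ are simultaneously satisfied on a right-neighbourhood of $a_0$.

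Now suppose $a_1 = a$. Let $\tilde\eta_1$ be the reflection of the lower curve across the geodesic normal to $e_1$ through $\gamma(a_0)$, re-parametrized so its tangent lies in the II quadrant, and let $\eta_2$ be the upper curve. Both curves are hypercyclically graphical, share the starting point $\gamma(a_0)$, and (under the assumption) end at a common point of $e_1$. For $t \in I_L$ and the unique $\bar t \in I_U$ with $S(\gamma(t)) = S(\gamma(\bar t))$, Lemma \ref{lem:const} gives
\[
\kappa_\gamma(t) + (n-2)\kappa(C_t) + H_1(\gamma(t)) = \kappa_\gamma(\bar t) + (n-2)\kappa(C_{\bar t}) + H_1(\gamma(\bar t)) = \lambda.
\]
Lemma \ref{lem:C_comparison} yields $\kappa(C_t) \leq \kappa(C_{\bar t})$. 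A short hyperbolic right-triangle computation along the common hypercycle leaf shows $d_H(o,\gamma(t)) \leq d_H(o,\gamma(\bar t))$; combined with Lemma \ref{lem:H_comparison} and the strict monotonicity of $h'$ coming from strict log-convexity, this forces $H_1(\gamma(t)) \geq H_1(\gamma(\bar t))$, strictly whenever the corresponding tangents of $\tilde\eta_1$ and $\eta_2$ are not both tangent to a common centered circle. Rearranging gives $\kappa_\gamma(\bar t) \leq \kappa_\gamma(t)$, strictly somewhere along the curve, so the strict case of Lemma \ref{lem:k_comparison} produces $\lim_{t \to a^-}\alpha(t) < -\pi/2$. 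Reflection symmetry then places the tangent cone of $\partial E$ at $\gamma(a)$ into a proper wedge contained in a half-space without being a hyperplane, contradicting Theorem \ref{thm:regularity}.

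Finally, $a_1 \in I_L$ follows from smoothness of $\gamma$ on $(-a,a)$ and closedness of the conditions defining $I_L$. For $\dot\gamma(a_1) = -X(\gamma(a_1))$, assume instead that $\dot\gamma(a_1)$ is strictly in the III quadrant. The same comparison argument applied at $\gamma(a_1)$ and $\gamma(\bar a_1)$ produces the strict inequality $\kappa_\gamma(\bar a_1) < \kappa_\gamma(a_1)$; continuity of $\kappa_\gamma$ and $\dot\gamma$ then propagates it to a right-neighbourhood of $a_1$ still inside $I_L$, contradicting maximality of $a_1$. The main obstacle is the sign bookkeeping in the $H_1$ comparison: Lemma \ref{lem:H_comparison}, the monotonicity of the radial distance along a common hypercycle, and strict log-convexity of $h$ must be combined in exactly the right order for the net contribution to have the correct sign, so that conservation of $\Hc_f$ translates into the strict curvature comparison that collides with the regularity of $\partial E$.
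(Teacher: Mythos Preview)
Your approach is essentially identical to the paper's: same setup of $\tilde\eta_1$ and $\eta_2$, same use of the three comparison lemmas together with the conservation law from Lemma~\ref{lem:const}, same contradiction via Lemma~\ref{lem:k_comparison} and the regularity statement in Theorem~\ref{thm:regularity}, and the same continuity argument for the final claim $\dot\gamma(a_1)=-X(\gamma(a_1))$.

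There is one slip worth fixing. You write that the comparison lemmas force $H_1(\gamma(t)) \geq H_1(\gamma(\bar t))$, but the correct inequality is the reverse, $H_1(\gamma(t)) \leq H_1(\gamma(\bar t))$. Lemma~\ref{lem:H_comparison} gives $g_H(N,\nu(t)) \leq g_H(N,\nu(\bar t))$, and your own distance inequality $d_H(o,\gamma(t)) \leq d_H(o,\gamma(\bar t))$ together with the monotonicity of $h'$ gives $0\leq h'(d_H(o,\gamma(t))) \leq h'(d_H(o,\gamma(\bar t)))$; combining these yields $H_1(t)\leq H_1(\bar t)$, which is exactly what is needed so that both corrections in the rearranged identity
\[
\kappa_\gamma(t)-\kappa_\gamma(\bar t)=(n-2)\bigl(\kappa(C_{\bar t})-\kappa(C_t)\bigr)+\bigl(H_1(\bar t)-H_1(t)\bigr)
\]
are nonnegative. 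With your stated direction the two terms would have opposite signs and no conclusion would follow. You evidently know this, since you reach the correct inequality $\kappa_\gamma(\bar t)\leq\kappa_\gamma(t)$ and you even flag the $H_1$ sign bookkeeping as the delicate point; just make the intermediate inequality consistent. A second, harmless imprecision: under the assumption $a_1=a$ the two curves end on the common \emph{leaf} $\delta_0=e_1$, not at a common point, which is all Lemma~\ref{lem:k_comparison} actually requires.
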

We prove the last part of the tangent lemma.
\begin{proposition}\label{prop:curl}If $\gamma$ is not a centered circle, then there exists $0<a_1<a_2$ such that $\dot\gamma(a_2)=X(\gamma(a_2))$.
\end{proposition}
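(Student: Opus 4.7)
The plan is to argue by contradiction: suppose $\gamma$ is not a centered circle and that $\dot\gamma(t) \neq X(\gamma(t))$ for every $t \in (a_1, a]$. By Proposition \ref{prop:down}, $\alpha(a_1) = -\pi/2$, and since $a_1 \in I_L$ with matching $\bar a_1 \in I_U$ at height $S(\gamma(a_1))$, one has the chain $\kappa_\gamma(a_1) \geq \kappa_\gamma(\bar a_1) \geq \kappa(C_{\bar a_1}) > 1$. Applying Lemma \ref{lem:formula_k} at $a_1$ with $\cos(-\pi/2) = 0$ yields $\dot\alpha(a_1) = -\kappa_\gamma(a_1) < -1 < 0$, so $\alpha$ strictly decreases past $-\pi/2$ into the open IV quadrant immediately after $a_1$.

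Since $\gamma$ closes at $\gamma(a) \in e_1$, the smoothness of $\gamma$ at the closure point combined with the spherical symmetry of $\Omega$ about $e_1$ forces $\dot\gamma(a)$ to be perpendicular to $e_1$ and to point from the upper to the lower half-plane; hence $\dot\gamma(a) = -X(\gamma(a))$ and $\alpha(a) \equiv -\pi/2 \pmod{2\pi}$. The strategy is to prove that $\alpha$ strictly decreases on the whole interval $[a_1, a]$: combined with $\alpha(a_1) = -\pi/2$ and the congruence on $\alpha(a)$, strict monotonicity implies $\alpha(a) \leq -5\pi/2$, whence by the intermediate value theorem there exists $a_2 \in (a_1, a)$ with $\alpha(a_2) = -3\pi/2$. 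At such $a_2$, $\sin(-3\pi/2) = 1$ and $\cos(-3\pi/2) = 0$, hence $\dot\gamma(a_2) = X(\gamma(a_2))$, contradicting the working hypothesis.

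The main obstacle is ruling out a turn-around point $t^\# \in (a_1, a)$ at which $\alpha(t^\#) \in (-3\pi/2, -\pi/2)$ and $\dot\alpha(t^\#) = 0$. By Lemma \ref{lem:formula_k}, this would force $\kappa_\gamma(t^\#) = K_1(\gamma(t^\#)) \cos\alpha(t^\#) \leq 0$, since $\cos\alpha(t^\#) < 0$ and $K_1 \geq 0$. Mean-convexity from Theorem \ref{thm:regularity} gives $\kappa_\gamma(t^\#) + (n-2)\kappa(C_{t^\#}) \geq n - 1$; in the planar case $n=2$ this immediately produces $\kappa_\gamma(t^\#) \geq 1 > 0$, contradicting $\kappa_\gamma(t^\#) \leq 0$. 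For $n \geq 3$ the spherical contribution $(n-2)\kappa(C_{t^\#})$ must be controlled, and I would exploit the constancy $\Hc_f \equiv \lambda$ together with the strict log-convexity of $h$ to compare $\Hc_f(t^\#)$ with $\Hc_f(a_1)$, adapting the $H_1$-comparison of Lemma \ref{lem:H_comparison} to the continuation of $\gamma$ past $a_1$ in order to produce a strictly positive lower bound on $\kappa_\gamma(t^\#)$. This last comparison step is the technically heaviest and is the hyperbolic analog of the corresponding piece of Chambers' Euclidean argument \cite{chambers}.
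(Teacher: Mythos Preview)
Your overall strategy — assume no $a_2$, show that $\alpha$ must keep strictly decreasing past $-\pi/2$, and derive a contradiction with how $\gamma$ can close on $e_1$ — matches the paper's. The place where you diverge is precisely the step you flag as ``technically heaviest'': controlling $\kappa(C_{t^\#})$ for $n\geq 3$. You propose to attack it through the constancy of $\Hc_f$ and an $H_1$-comparison in the spirit of Lemma~\ref{lem:H_comparison}. This is both unnecessary and not actually carried out; the paper disposes of the issue by a direct geometric observation that you are missing.

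Once $\dot\gamma$ is in the IV (or I) quadrant, translate $\gamma(t)$ onto $e_2$ by the hyperbolic isometry fixing $e_1$. Then the tangent circle $C_t$ with center on $e_1$ is traversed \emph{clockwise}, so $\kappa(C_t)<0$. Mean-convexity from Theorem~\ref{thm:regularity} then gives
\[
\kappa_\gamma(t)\;\geq\;(n-1)-(n-2)\kappa(C_t)\;>\;n-1\;>\;1,
\]
with no appeal to $\Hc_f$, to $h$, or to any comparison lemma. Feeding this into Lemma~\ref{lem:formula_k} yields $\dot\alpha=-\kappa_\gamma+K_1\cos\alpha<-1+K_1<0$ throughout, since $K_1\in[0,1)$; so no turn-around point $t^\#$ can exist, in any dimension. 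This is the whole content of the ``curl'' step, and it is elementary once you notice the orientation reversal of $C_t$.

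A smaller issue: you assume $\gamma$ is smooth at $t=a$ with $\dot\gamma(a)=-X$ and then run an IVT argument down to $\alpha(a)\leq -5\pi/2$. The paper does not assume smoothness at $a$; it instead argues (as in the proof of Proposition~\ref{prop:down}) that if $\alpha$ stays strictly below $-\pi/2$ all the way to $a$, the tangent cone at $\gamma(a)$ lies in a halfspace without being a hyperplane, contradicting Theorem~\ref{thm:regularity}. Your formulation implicitly uses this regularity, but stating it as ``smoothness at the closure point'' skips the actual justification. Moreover, once $\kappa_\gamma>1$ holds on the whole curl interval, either $\alpha$ remains in the IV quadrant (and you get the regularity contradiction at $a$) or it crosses into the I quadrant and, still strictly decreasing, hits $\alpha=-3\pi/2$ before $a$ — which already furnishes $a_2$. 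There is no need to push to $-5\pi/2$.
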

\begin{proof}
If $a_2$ exists we are done. Otherwise, we show that the non existence contradicts the mean-curvature convexity of $\Omega$. Let 
\[
I_c:=\{t\in[a_1,a):\dot\gamma\text{ is in the I or IV quadrant}\}.
\]
Here the index stands for \emph{curl curve}. Set $\tilde a_2:=\sup I_c$. Since $\kappa(a_1)>1$ we have that $a_1<\tilde a_2$. If $\tilde a_2<a$, then the mean convexity of $\Omega$ implies that
\[
\kappa_{\gamma}(\tilde a_2)\geq(n-1)-(n-2)\kappa(C_{\tilde a_2})>0.
\]
To see this, move $\gamma(\tilde a_2)$ on $e_2$ as in Lemma \ref{lem:preliminary_prop}. Then, $C_{\tilde a_2}$ is oriented clockwise, and hence has negative curvature. But this implies that we can extend $I_c$ after $\tilde a_2$, contradicting the definition of $\tilde a_2$. So, we need to rule out the situation in which $\tilde a_2=a$. If it is the case, then again for mean-convexity one has that in $I_c$
\[
\kappa_\gamma(t)>1.
\]
Moreover, for $t\in I_c\setminus\{a_1\}$ we have that $\dot\gamma$ lies in the IV quadrant, because otherwise this implies together with $\kappa_\gamma(t)>0$ that $\gamma$ cannot close at $e_1$. Therefore $\alpha(t)$ lies in the IV quadrant and it is strictly increasing, implying that
\[
\lim_{t\to a^+}\alpha(t)<-\frac\pi2.
\]
This cannot be because of the before mentioned regularity properties of isoperimetric sets.
\end{proof}
The proof of the tangent lemma is then the collection of the results we showed in this section.
\begin{proof}[Proof of Lemma \ref{lem:tangent}]The existence of the chain $0<a_0<a_1<a_2<a$ is ensured by Proposition \ref{prop:up}, Proposition \ref{prop:down} and Proposition \ref{prop:curl}.
\end{proof}
\section{Symmetric sets in $H_{\K}^m$}\label{sec:corollary}
Consider any rank one symmetric space of non-compact type $(M^n,g)=(H_{\K}^m,g)$, $\K\in\{\C,\Ham,\Oct\}$. Set $d=\dim(\K)\in\{2,4,8\}$ so that the real dimension of $M$ is $n=md$. Recall that if $\K=\Oct$, we only have the Cayley plane $H^2_\Oct$. As classical references on symmetric spaces we cite the books of Eberlein \cite{Eberlein} and Helgason \cite{Helgason}. Fix an arbitrary base point $o\in M$, and let $N$ be the unit-length, radial vector field emanating from it. As in Definition \ref{def:hopf_symmetric}, let $\Dist$ be the distribution on $M\setminus\{o\}$ induced by the $(-4)$-eigenspace of the Jacobi operator $R(\cdot,N)N$.  Denote with $\Vist$ the orthogonal complement of $\Dist$ with respect to $g$. For every $x\in M\setminus\{o\}$, we have the orthogonal splitting 
\begin{equation*}
T_xM=\Dist_x\oplus\Vist_x,
\end{equation*}
with orthogonal projections $(\cdot)^\Dist$ and $(\cdot)^\Vist$.  Let now $(\bar M^n,g_H)=(H^n_\R,g_H)$, and choose an arbitrary base point $\bar o$ in it. The isometric identification of $T_oM$ with $T_{\bar o}{\bar M}$ according to the flat metrics 
$(\exp_o^M)^*g\vert_0$ and $(\exp_{\bar o}^{\bar M})^*g_H\vert_0$, induces a well defined diffeomorphism
\[
\Psi=\exp_{\bar o}^{\bar M}\circ(\exp_o^M)^{-1}:M\to \bar M.
\]
With a slight abuse of notation, we still denote with $g_H$ the metric $\Psi^*g_H$, that makes $M$ isometric to $\bar M$. The following lemma allows us to compare $g$ with $g_H$.
\begin{lemma}\label{lem:comparing_manifolds}For every $x\in M\setminus\{o\}$, the splitting
\[
T_xM=\Dist_x\oplus\Vist_x,
\]
is orthogonal with respect to $g_H$. In particular, letting $d_H$ be the Riemannian distance induced by $g_H$ on $M$, one has that
\begin{equation}\label{eq:relation}
g(v,w)= \cosh^2(d_H(o,x))g_H(v^\Dist,w^\Dist)+g_H(v^\Vist,w^\Vist),
\end{equation}
for all $v,w\in T_xM$.
\end{lemma}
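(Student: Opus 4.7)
The plan is to use Jacobi fields along the radial geodesic from $o$ to $x$ to compute $g$ and the pullback $g_H$ simultaneously, exploiting the parallelness along $\gamma$ of the spectral decomposition of the Jacobi operator on a rank one symmetric space.

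First I would fix a unit vector $v_0 \in T_o M$ so that $x = \exp_o^M(r v_0)$ with $r = d(o,x)$, and set $\gamma(t) = \exp_o^M(t v_0)$. By the very definition of $\Psi$ and the identification $T_o M \cong T_{\bar o} \bar M$, the map $\Psi$ sends $\gamma$ to the $g_H$-unit-speed geodesic $\bar\gamma(t) = \exp_{\bar o}^{\bar M}(t v_0)$, so that $d_H(o,x) = r$. The tangent space $T_o M$ splits $g$-orthogonally as $\R v_0 \oplus E_{-4} \oplus E_{-1}$, the eigenspaces of the Jacobi operator $R(\cdot,v_0)v_0$ of dimensions $d-1$ and $n-d$ respectively. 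Since $g|_o$ and $g_H|_o$ agree after the identification, this splitting is also $g_H$-orthogonal.

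Next, for $w_0 \in T_o M$ perpendicular to $v_0$, I would consider the Jacobi fields $J$ in $M$ and $\bar J$ in $\bar M$ with $J(0) = \bar J(0) = 0$ and $J'(0) = \bar J'(0) = w_0$. The chain rule applied to $\Psi \circ \exp_o^M = \exp_{\bar o}^{\bar M}$ gives $d\Psi_x(J(r)) = \bar J(r)$. Since $\nabla R \equiv 0$ in a symmetric space, the eigenspace decomposition is parallel along $\gamma$, and integrating the Jacobi equation one finds $J(t) = \tfrac{\sinh(2t)}{2} P_t(w_0)$ for $w_0 \in E_{-4}$ and $J(t) = \sinh(t)\, P_t(w_0)$ for $w_0 \in E_{-1}$, where $P_t$ denotes $g$-parallel transport; in particular $J(r) \in \Dist_x$ in the first case and $J(r)$ lies in the $(-1)$-eigenspace at $x$ (hence in $\Vist_x$) in the second. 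In $\bar M$, constant curvature $-1$ gives uniformly $\bar J(t) = \sinh(t)\,\bar P_t(w_0)$, with $\bar P_t$ the $g_H$-parallel transport.

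Since parallel transport preserves each respective inner product, a direct computation then yields $g(v,v) = \cosh^2(r)\, g_H(v,v)$ for $v \in \Dist_x$ (using $\sinh(2r) = 2\sinh(r)\cosh(r)$), while $g(v,v) = g_H(v,v)$ on the $(-1)$-eigenspace at $x$ and on $\R N(x)$, noting that $N(x) = \dot\gamma(r)$ has unit norm in both metrics since $\gamma$ and $\bar\gamma$ share their arclength parameter. Analogously, the $g_H$-orthogonality of $v_0$, $E_{-4}$ and $E_{-1}$ at $o$, transported along $\bar\gamma$ by $\bar P$, yields the $g_H$-orthogonality of $\R N(x)$, $\Dist_x$ and the $(-1)$-eigenspace at $x$, hence of the splitting $\Dist_x \oplus \Vist_x$. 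Polarizing and combining then gives \eqref{eq:relation}.

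The delicate point is the parallelness along $\gamma$ of the eigenspace decomposition of $R(\cdot,\dot\gamma)\dot\gamma$, which is what identifies $\Dist_x$ (defined intrinsically in Definition \ref{def:hopf_symmetric}) with the values at $t = r$ of Jacobi fields originating in $E_{-4} \subset T_oM$. Once this is in hand, the $\sinh(2r)/2$ versus $\sinh(r)$ discrepancy between the two growth rates of Jacobi fields in $M$ and in $\bar M$ is precisely what produces the anisotropic conformal factor $\cosh^2(r)$ in the $\Dist$ direction, while leaving $\Vist$ untouched.
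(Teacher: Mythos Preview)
Your proposal is correct and follows essentially the same argument as the paper: both compare Jacobi fields along the radial geodesic, use the parallelness of the Jacobi operator's eigenspaces (from $\nabla R\equiv 0$) to write the Jacobi field as $\sinh(\sqrt{-\kappa}\,t)/\sqrt{-\kappa}$ times a parallel frame in $M$ and as $\sinh(t)$ times a parallel frame in $\bar M$, and then read off the $\cosh^2$ factor from the identity $\sinh(2r)=2\sinh(r)\cosh(r)$. Your version is slightly more explicit about the role of $d\Psi$ and the polarization step, but there is no substantive difference.
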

\begin{proof}Fix an arbitrary unit direction $N_o\in T_oM$, and let $V_o\in T_oM$ be any vector orthogonal to it with respect to $g\vert_o=g_H\vert_o$. Since the radial geodesics emanating from $o$ are the same for $g$ and $g_H$, the Jacobi field $Y(t)$ along the geodesic $\sigma:t\mapsto\exp_o^M(tN_o)$, determined by the initial conditions $Y(0)=0$, $\dot Y(0)=V_o$ is the same for both metrics. Let $V(t)$ and $V_H(t)$ be the parallel transport of $V_o$ along $\sigma$ with respect to $g$ and $g_H$, respectively. By the very definition of symmetric spaces, the curvature tensor $R$ is itself parallel along geodesics. This implies that
\[
\sinh(t)V_H(t)=Y(t)=\frac{\sinh(\sqrt{-\kappa}t)}{\sqrt{-\kappa}}V(t),
\]
provided $V_o$ belongs to the $\kappa$-eigenspace of the Jacobi operator $R(\cdot,N_o)N_o$. Therefore, parallel vector fields in the eigenspaces are collinear for the two metrics. Hence, for $t>0$ the linear subspaces $\Dist_{\sigma(t)}$ and $\Vist_{\sigma(t)}$ are nothing else than the parallel transport of the corresponding eigenspaces of $R(\cdot,N_o)N_o$ along $\sigma$. It follows that the splitting $T_xM=\Dist_x\oplus \Vist_x$ is orthogonal not only with respect to $g$, but also with respect to the hyperbolic metric $g_H$. Equation \eqref{eq:relation} is a direct consequence of this fact and the definition of the distribution $\Dist$.
\end{proof}
We can now prove Theorem \ref{thm:2}.
\begin{proof}[Proof of Theorem \ref{thm:2}]
    Let $E\subset M$ be an Hopf-symmetric set with outward pointing normal vector field $\nu$ with respect to $g$. By the very definition of being Hopf-symmetric, $\nu^\Dist\equiv 0$. Therefore, thanks to Lemma \ref{lem:comparing_manifolds}, $\nu$ is orthonormal to $\partial E$ also with respect to $g_H$. Let $\vol$ and $\vol_H$ the volume forms associated to $g$ and $g_H$. We have that
    \[
    P(E)=\int_{\partial E}\iota_\nu\vol=\int_{\partial E}\cosh^{d-1}(d_H(o,x))\iota_\nu\vol_H(x),
    \]
    where $\iota:\Omega(M)^p\to\Omega(M)^{p-1}$ denotes the interior product $\iota_X\alpha(\cdot)=\alpha(X,\cdot)$. The volume of $E$ is given by the formula
    \[
    V(E)=\int_E\vol=\int_{E}\cosh^{d-1}(d_H(o,x))\vol_H(x).
    \]
    Hence, the volume and perimeter of Hopf-symmetric sets in $M$ correspond to the volume and perimeter of $\Psi(E)$ in $H^n_\R$ with density equal to $h(r)=\cosh^{d-1}(r)$, concluding the proof.
\end{proof}
\begingroup
\raggedright
\sloppy
\printbibliography
\end{document}